\theoremstyle{plain}
\theoremstyle{definition}
\newtheorem{theorem}{Theorem}[section]
\newtheorem{conjecture}[theorem]{Conjecture}
\newtheorem{definition}[theorem]{Definition}
\newtheorem{lemma}[theorem]{Lemma}
\newtheorem{proposition}[theorem]{Proposition}
\theoremstyle{remark}
\newtheorem{remark}[theorem]{Remark}
\renewenvironment{proof}[1][\proofname]{%
	\par\pushQED{\qed}\normalfont%
	\topsep6\p@\@plus6\p@\relax
	\trivlist\item[\hskip\labelsep\bfseries#1\@addpunct{.}]%
	\ignorespaces
}{%
	\qedhere %\popQED\endtrivlist\@endpefalse
}
\DeclareRobustCommand*{\bfseries}{%
	\not@math@alphabet\bfseries\mathbf
	\fontseries\bfdefault\selectfont
	\boldmath
}
\renewcommand{\th}{^\mathsf{th}}
\newcommand{\area}{\mathsf{area}}
\newcommand{\inv}{\mathsf{inv}}
\newcommand{\height}{\mathsf{ht}}
\newcommand{\lv}{\mathsf{lv}}
\newcommand{\rev}{\mathsf{rev}}
\newcommand{\LD}{\mathsf{LD}} % 2-labelled Dyck paths
\newcommand{\zLD}{\mathsf{zLD}} % Zero area 2-labelled Dyck paths
\newcommand{\RTT}{\mathsf{RTT}} % Tiered trees
\newcommand{\zRTT}{\mathsf{zRTT}} % Zero inv tiered trees
\newcommand{\Ht}{\widetilde{H}}
\newcommand{\N}{\mathbb{N}}
\newcommand{\Q}{\mathbb{Q}}
\let\oldXi\Xi
\let\Xi\undefined
\DeclareMathOperator{\Xi}{\oldXi}
\DeclareMathOperator{\PPi}{\Pi}
\newcommand{\zeropaths}{\zLD^2(n)}%\mathsf{LD}_0^2(n)}
\newcommand{\zerotrees}{\zRTT_0(n)}%\mathsf{RTT}^0_0(n)}
\newcommand{\comp}{\bumpeq}
\newcommand{\red}{\color{red}}
\newcommand{\blue}{\color{blue}}
\newcommand{\green}{\color{green!50!black}}
\title{A Proof of the Symmetric Theta Conjecture when $q=0$}
\author{Alessandra Caraceni, Alessandro Iraci}
\begin{document}

\maketitle

\begin{abstract}
    In \cite{DAdderioIraciLeBorgneRomeroVandenWyngaerd2022TieredTrees}, the authors conjecture a combinatorial formula for the expressions $\Xi e_\alpha \rvert_{t=1}$, known as \emph{Symmetric Theta Trees Conjecture}, in terms of tiered trees with an inversion statistic. In \cite{IraciRomero2022DeltaTheta}, the authors prove a combinatorial formula for the same symmetric function, in terms of doubly labelled Dyck paths with the area statistic. In this paper, we give an explicit bijection between the subsets of the two families of objects when the relevant statistic is equal to $0$, thus proving the Symmetric Theta Tree Conjecture when $q=0$.
\end{abstract}

\section{Introduction}

The \emph{Theta operators} $\Theta_f$, for any symmetric function $f$, are a family of operators on symmetric functions that show remarkable combinatorial properties. They were first defined in \cite{DAdderioIraciVandenWyngaerd2021ThetaOperators}, and they were instrumental in the formulation and proof of the \emph{compositional Delta conjecture} \cite{DAdderioMellit2022CompositionalDelta}, a refinement of the famous \emph{(rise) Delta conjecture} \cite{HaglundRemmelWilson2018DeltaConjecture}. The Delta conjecture is one of many combinatorial expressions tied to the combinatorial theory of Macdonald polynomials, which started with the shuffle theorem \cite{HaglundHaimanLoehrRemmelUlyanov2005ShuffleConjecture, CarlssonMellit2018ShuffleConjecture} and then inspired a wide variety of results.

% From a representation-theoretical point of view, the shuffle theorem gives a combinatorial interpretation for the Frobenius character of the purely bosonic $(2,0)$-coinvariant module \cite{Haiman2002HilbertScheme}. Delta operators give a conjectural formula for the mixed $(2,1)$-coinvariant module \cite{Zabrocki2019ModuleDeltaConjecture}; once rephrased in term of Theta operators, D'Adderio and Mellit managed to prove the combinatorial interpretation for the symmetric function formula. Finally, Theta operators allow us to push it even further and give a conjectural formula for the $(2,2)$-coinvariant module \cite{DAdderioIraciVandenWyngaerd2021ThetaOperators}: studying symmetric function identities involving these operators allowed the second author, Rhoades, and Romero to prove the $(0,2)$-case \cite{IraciRhoadesRomero2023FermionicTheta};  and the second author, Nadeau, and Vanden Wyngaerd to give a combinatorial interpretation for the symmetric function formula for the $(1,2)$-case \cite{IraciNadeauVandenWyngaerd2024}.

\emph{Tiered trees} were first defined in \cite{DuganGlennonGunnellsSteingrimsson2019TieredTreesqEulerian} as trees on vertices labelled $1,\dots, n$ with an integer-valued level function $\lv$ on the vertices such that vertices labelled $i$ and $j$ with $i<j$ may be adjacent only if $\lv(i) < \lv(j)$.  They are a generalisation of \emph{intransitive trees} \cite{Postnikov1997IntransitiveTrees}, which are tiered trees with only two levels. The notion is related to spanning trees of inversion graphs and has connections to the \emph{abelian sandpile model} on such graphs \cite{DukesSeligSmithSteingrimsson2018TieredTrees}; in \cite{DAdderioIraciLeBorgneRomeroVandenWyngaerd2022TieredTrees}, the authors slightly extend the definition of tiered trees to allow for non-distinct labels. Tiered trees naturally arise as counting both absolutely irreducible representations of certain supernova quivers and certain torus orbits on partial flag varieties of type $A$ \cite{DuganGlennonGunnellsSteingrimsson2019TieredTreesqEulerian, GunnellsLetellierRodriguezVillegas2018Quivers}.

In \cite{DAdderioIraciLeBorgneRomeroVandenWyngaerd2022TieredTrees}, the authors conjecture a combinatorial formula for the expressions $\Theta_{e_\alpha} e_1 \rvert_{t=1}$, for any composition $\alpha \vDash n$, in terms of tiered trees with an inversion statistic; they also prove the special case $\alpha = 1^n$. We are interested in a similar conjectural formula appearing in the same paper, giving a combinatorial interpretation for $\Xi e_\alpha \coloneqq M \Delta_{e_1} \Pi e_\alpha^\ast$ when $t=1$ (the notation $\Xi$ is posterior), of which no special cases were known prior to our result. This conjecture, known as \emph{symmetric Theta conjecture}, turned out to have a significant tie to the combinatorial theory of Macdonald polynomials.

In \cite{IraciRomero2022DeltaTheta}, the authors formally introduce the $\Xi$ operator, and prove a combinatorial formula for $\Xi e_\alpha \rvert_{t=1}$ in terms of certain families of Dyck paths, with the classical area statistic. They also show an explicit $e$-expansion in terms of labelled Dyck paths, thus proving $e$-positivity of $\Xi e_\alpha \rvert_{t=1}$, and getting as a corlloary the univariate versions of the shuffle theorem, the Delta conjecture, and more. Later on, in \cite{BergeronHaglundIraciRomero2023SuperNablaOperator}, the authors refine the result giving a monomial expansion for the expression $\nabla_\ast e_n = \sum_{\lambda \vdash n} m_\lambda \otimes \Xi e_\lambda$ when $t=1$, which is independently symmetric in two sets of variables, in terms of $2$-labelled Dyck paths with the area statistic.

Indeed, combining these results, we get that the symmetric Theta conjecture also gives a (conjectural) monomial expansion for $\nabla_\ast e_n \rvert_{t=1}$, which can, in principle, be proved by finding an explicit bijection between tiered trees and $2$-labelled Dyck paths, as long as it maps the number of inversions of the tree to the area of the Dyck path, and preserves the associated monomial. In turns out that, even for $n=2$, preserving the individual pairs of labels is impossible.

Due to this difficulty, tackling this problem turned out to be harder than expected; however, the bijective approach, if appropriately modified, can still work. In this paper we exhibit an explicit bijection between tiered trees with no inversions and $2$-labelled Dyck paths with area equal to $0$, preserving the $X$-monomial and sending the $Y$-monomial to one with the same multiset of exponents; since $\nabla_\ast e_n \rvert_{t=1}$ is known to be symmetric, this proves the symmetric Theta conjecture when $q=0$.

This paper is structured as follows: in \Cref{sec:combinatorics}, we give the combinatorial definitions that are necessary to state the combinatorial expansions we are interested in; in \Cref{sec:symmetricfunctions}, we define the symmetric function operators we need, and then state the relevant combinatorial interpretations of the symmetric function expressions we want to study; in \Cref{sec:bijection}, we state the bijection and give a proof of the symmetric Theta conjecture when $q=0$.

\section{Combinatorial definitions}
\label{sec:combinatorics}

\subsection{Tiered trees}\label{sec:trees}

In this work, a \emph{graph} $G$ will be a pair $(V,E)$, with $V$ a finite set of \emph{vertices} and $E \subseteq \binom{V}{2}$ a set of \emph{edges} (hence no loops nor multiple edges are allowed). We say that $i,j \in V$ are \emph{neighbours} in $G$ if $\{i,j\} \in E$. We use the habitual notions of \emph{paths}, \emph{closed paths}, \emph{circuits}, \emph{connected components}, \emph{distance} between two vertices, and so on.
	
A \emph{forest} is a graph with no circuits; a \emph{tree} is a connected forest. Notice that a forest is a union of trees. A \emph{rooted tree} is a tree $(V,E)$ with a distinguished vertex $r \in V$ which we call its \emph{root}; we call a \emph{rooted forest} a disjoint union of rooted trees.

Rooted forests on $n$ vertices are naturally in bijection with rooted trees on $n+1$ vertices. Indeed, given a rooted forest $F$ with vertex set $V$ and edge set $E$, one can obtain a rooted tree $T$ with vertex set $V \cup \{r\}$, where $r$ is a new vertex which is going to be the root of the tree, and edge set $E \cup \{ \{r, r_1\}, \dots, \{r, r_k\} \}$, where $r_1, \dots, r_k$ are the roots of the connected components of $F$. Vice versa, given a rooted tree $T$, we can obtain a rooted forest by removing the root and rooting each connected component of the remaining forest at the vertex that used to be connected to the root of $T$.

Let $T$ be a rooted tree $(V,E,r)$ with root $r \in V$. Given a vertex $v \in V$, we define the \emph{height} of $v$ as the graph distance $\height(v)$ between $v$ and $r$. We define the \emph{parent} of $v \neq r$ as the unique neighbour $p(v)$ of $v$ such that $\height(p(v)) < \height(v)$, and we say that $v$ is a \emph{child} of $p(v)$. We say that $u$ is a \emph{descendant} of $v$ (and $v$ is an \emph{ancestor} of $u$) if there exists $k > 0$ such that $v = p^k(u)$.

\begin{definition}
	\label{def:tiered-rooted-tree}
	A \emph{tiered rooted forest} is a rooted forest $F = (V,E,r)$ equipped with two functions $w, \lv \colon V \setminus \{r\} \rightarrow \N_+$, called \emph{label} and \emph{level}, such that:	
	\begin{enumerate}
		\item if $\{ u, v \} \in E$, then $w(u) \neq w(v)$ and $\lv(u) \neq \lv(v)$;
		\item if $\{ u, v \} \in E$, then $w(u) < w(v) \iff \lv(u) < \lv(v)$;
		\item if $u \neq v$ and $p(u) = p(v)$, then $(w(u), \lv(u)) \neq (w(v), \lv(v))$.
	\end{enumerate}

    A \emph{tiered rooted tree} is a tree $T = (V,E,r)$ rooted at $r \in V$, with two functions $w, \lv \colon V \setminus \{r\} \rightarrow \N$ such that its corresponding forest is a tiered rooted forest, and such that $w(r) = \lv(r) = 0$.
\end{definition}

We define $\RTT_0(n)$ to be the set of tiered rooted trees on $n+1$ vertices (including the root).

It will be convenient later to have a shorthand for the \emph{reverse level}. Let $F$ be a tiered rooted tree, and let \[ L = \max_{v \in V \setminus \{r\}} \lv(v), \quad l = \min_{v \in V \setminus \{r\}} \lv(v). \] For $v \in V$, we set $\lv'(v) = L + l - \lv(v)$.

\begin{definition}
    We define the \emph{label composition} (resp.~\emph{level composition}) of a tiered rooted forest $F$ on $n$ vertices to be the weak composition $\alpha(F) \vDash_0 n$ (resp.~$\beta(F) \vDash_0 n$) defined by $\alpha_i \coloneqq w^{-1}(i)$ (resp.~$\beta_i \coloneqq \lv^{-1}(i)$); that is, $\alpha_i$ (resp.~$\beta_i$) is the number of vertices with label (resp.~level) equal to $i$.
\end{definition}

If $T$ is a tiered rooted tree, we define $\alpha(T) \coloneqq \alpha(F)$ (resp.~$\beta(T) \coloneqq \beta(F)$), where $F$ is the corresponding forest. Let $\RTT_0(\alpha, \beta) \coloneqq \{ T \in \RTT_0(n) \mid \alpha(T) = \alpha \text{ and } \beta(T) = \beta \}$.

\begin{figure}[!t]
	\centering
	\begin{tikzpicture}[scale=.8]
	\draw[gray!60]
	(-4,0) -- (4,0)
	(-4,2) -- (4,2)
	(-4,4) -- (4,4)
	(-4,6) -- (4,6);
	
	\node at (-5, 0) {$0$};
	\node at (-5, 2) {$1$};
	\node at (-5, 4) {$2$};
	\node at (-5, 6) {$3$};
	
	\node[circle, fill=gray!40, thick, draw=black] (0) at (0,0) {$0$};
	\node[circle, fill=white, thick, draw=black] (1) at (-3,2) {$3$};
	\node[circle, fill=white, thick, draw=black] (2) at (-1,2) {$2$};
	\node[circle, fill=white, thick, draw=black] (3) at (1,2) {$2$};
	\node[circle, fill=white, thick, draw=black] (4) at (3,2) {$1$};
	\node[circle, fill=white, thick, draw=black] (5) at (-2,4) {$4$};
	\node[circle, fill=white, thick, draw=black] (6) at (2,4) {$2$};
	\node[circle, fill=white, thick, draw=black] (7) at (0,6) {$4$};
	\node[circle, fill=white, thick, draw=black] (8) at (-3,6) {$4$};
	
	\draw[red, thick]
	(0) -- (2)
	(0) -- (7)
	(1) -- (5)
	(2) -- (5)
	(3) -- (7)
	(4) -- (6)
	(6) -- (7)
	(1) -- (8);
  \draw[black, very thick, dashed,->] (8)--(5);
	\end{tikzpicture}\quad 
	\begin{tikzpicture}[yscale=.8]
 \tikzset{root/.style={fill=black}, TTnode/.style={circle, draw=black, thick, inner sep=1pt, fill=gray!20}, edge/.style={thick}}
    \node [TTnode, root] (0) at (0,0) {\tiny\color{white}$ 0\,0$};
    \node [TTnode] (1) at (1,1.5) {$\red 2\,\green 1$};
    \node [TTnode] (2) at (-1.5,1.5) {$\red 4\,\green3$};
    \node [TTnode] (3) at (1,3) {$\red 4\,\green2$};
    \node [TTnode] (4) at (1,4.5) {$\red 3\,\green1$};
    \node [TTnode] (5) at (1,6) {$\red 4\,\green3$};
    \node [TTnode] (6) at (-0.5,3) {$\red 2\,\green2$};
    \node [TTnode] (7) at (-0.5,4.5) {$\red 1\,\green1$};
    \node [TTnode] (8) at (-2.5,3) {$\red 2\,\green1$};
    \draw[edge] (0)--(1)--(3)--(4)--(5) (0)--(2)--(6)--(7) (2)--(8);
      \draw[black, very thick, dashed, bend right=45,->] (5) to (3);
    \end{tikzpicture}
	\caption{A tiered rooted tree $T \in \RTT_0((1,3,1,3), (4,2,2))$, represented in two ways. On the left, the tree is drawn as in \cite{DAdderioIraciLeBorgneRomeroVandenWyngaerd2022TieredTrees}: for each $v \in V$, the label $w(v)$ is noted inside the vertex, and $\lv(v)$ is the height of the horizontal line in which $v$ is placed. We will later find it convenient to represent the trees as done on the right, with the ordered pair $(w(v), \lv(v))$ written inside each vertex (the label is first, in red, the level second, in green); note that the height of each vertex does \emph{not} reflect its level. In both representation, one inversion $(u,v)$ is highlighted by a dashed arrow from $v$ to $u$. Note that $w(v)=w(u)=4$, $\lv(v)>\lv(u)$ and $v$ is compatible with $p(u)$.}
	\label{fig:tree}
\end{figure}
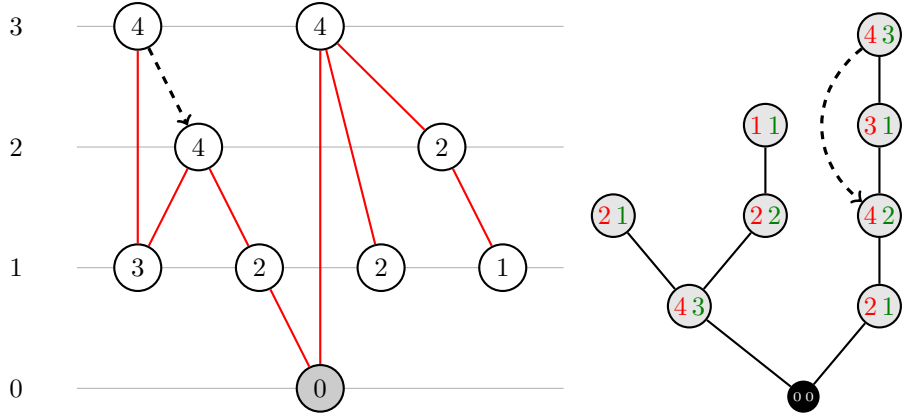

\begin{definition}
	Let $T$ be a rooted tiered tree. Two vertices $u, v$ are said to be \emph{compatible} if either $\lv(u) < \lv(v) \land w(u) < w(v)$, or $\lv(u) > \lv(v) \land w(u) > w(v)$. In this case, we write $u \comp v$.
\end{definition}

Notice that being compatible is not an equivalence relation, as it is symmetric but not transitive.

\begin{definition}
    \label{def:inv}
	Let $T$ be a rooted tiered tree. We say that a pair $(u,v)$ of vertices $u, v \in V \setminus \{r\}$ form an inversion if:
	\begin{enumerate}
		\item $v$ is a descendant of $u$;
		\item $v \comp p(u)$;
		\item either $w(v) < w(u)$, or $w(v) = w(u) \land \lv(v) > \lv(u)$.
	\end{enumerate}
    We define $\inv(T)$ to be the number of inversions of $T$.
\end{definition}

For example, in the tiered rooted tree $T$ in \Cref{fig:tree}, there are $5$ inversions $(u,v)$, whose pairs $((w(u), \lv(u)), (w(v), \lv(v)))$ are $((4,2), (4,3)), ((4,3), (2,2))$, $((4,3), (1,1))$, $((4,3), (2,1))$, and $((2,2), (1,1))$. It follows that $\inv(T) = 5$.

For our purposes, it will be convenient to have a name for the set of trees with no inversions; we set $\zRTT_0(n) \coloneqq \{ T \in \RTT_0(n) \mid \inv(T) = 0 \}$ and $\zRTT_0(\alpha, \beta) \coloneqq \RTT_0(\alpha, \beta) \cap \zRTT_0(n)$.

\subsection{Dyck paths}

\begin{definition}
    A \emph{Dyck path} of size $n$ is a lattice path $\pi$ from $(0,0)$ to $(n,n)$, composed of north and east steps only, that lies entirely weakly above the diagonal $x=y$. A \emph{$2$-labelled Dyck path} is Dyck path equipped with two functions $\pi_x, \pi_y \colon [n] \rightarrow \mathbb{N}_+$ such that the number of east steps of $\pi$ on the line $y = i$ is at least $\chi(\pi_x(i+1) \leq \pi_x(i)) + \chi(\pi_y(i+1) \leq \pi_y(i))$.
\end{definition}

In other words, if we assign labels $\pi_x(i)$ and $\pi_y(i)$ to the north steps of $\pi$, then $\pi_x$ and $\pi_y$ are both strictly increasing along the columns, and if there is exactly one east step between the $i\th$ and $(i+1)\th$ north steps, then at least one between $\pi_x$ and $\pi_y$ is strictly increasing at that point.

Let $\LD^2(n)$ be the set of $2$-labelled Dyck paths of size $n$.

\begin{definition}
    We define the \emph{$x$-composition} (resp.~\emph{$y$-composition}) of a $2$-labelled Dyck path $\pi$ of size $n$ to be the weak composition $\alpha(\pi) \vDash_0 n$ (resp.~$\beta(\pi) \vDash_0 n$) given by $\alpha_i \coloneqq \pi_x^{-1}(i)$ (resp.~$\beta_i \coloneqq \pi_y^{-1}(i)$); that is, $\alpha_i$ (resp.~$\beta_i$) is the number of labels equal to $i$ appearing among the left (resp.~right) labels.
\end{definition}

Let $\LD^2(\alpha, \beta) \coloneqq \{ \pi \in \LD^2(n) \mid \alpha(\pi) = \alpha \text{ and } \beta(\pi) = \beta \}$.

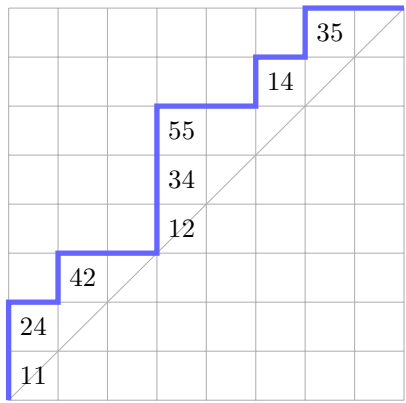
\begin{figure}[!ht]
	\centering
	\begin{tikzpicture}[scale=.65]
		\draw[step=1.0, gray!60, thin] (0,0) grid (8,8);

		\begin{scope}
			\clip (0,0) rectangle (8,8);
			\draw[gray!60, thin] (0,0) -- (8,8);
		\end{scope}

		\draw[blue!60, line width=2pt] (0,0) -- (0,1) -- (0,2) -- (1,2) -- (1,3) -- (2,3) -- (3,3) -- (3,4) -- (3,5) -- (3,6) -- (4,6) -- (5,6) -- (5,7) -- (6,7) -- (6,8) -- (7,8) -- (8,8);

		\node at (0.5,0.5) {$11$};
		\node at (0.5,1.5) {$24$};
		\node at (1.5,2.5) {$42$};
		\node at (3.5,3.5) {$12$};
		\node at (3.5,4.5) {$34$};
		\node at (3.5,5.5) {$55$};
		\node at (5.5,6.5) {$14$};
		\node at (6.5,7.5) {$35$};
	\end{tikzpicture}
	\caption{A 2-labelled Dyck path in $\LD^2((3,1,2,1,1),(1,2,0,3,2))$.}
	\label{fig:labelled-dyck-path}
\end{figure}

\begin{definition}
    For a Dyck path $\pi$, we define $\area(\pi)$ to be the number of whole squares between the path and the main diagonal.    
\end{definition}

For example, the path in \Cref{fig:labelled-dyck-path} has area equal to $7$.

For our puroposes, it will be convenient to have a name for the set of $2$-labelled Dyck path with area equal to $0$, so we let $\zLD^2(n) \coloneqq \{ \pi \in \zLD^2(n) \mid \area(\pi) = 0 \}$, and $\zLD^2(\alpha, \beta) \coloneqq \LD^2(\alpha, \beta) \cap \zLD^2(n)$.

\begin{remark}
	An element of $\zeropaths$ can be identified with a pair of sequences $(a,b) \in (\mathbb{Z}_+^n)^2$ such that, for $i \leq n-1$, we have either $a_i < a_{i+1}$ or $b_i < b_{i+1}$. With a slight abuse of notation, we will sometimes identify the pair of sequences $(a,b)$ with the sequence of pairs $((a_i, b_i))_{i \in [n]}$.
\end{remark}

Finally, we need to define a bijection on weak compositions. For $\beta \vDash_0 n$, let $s(\beta) = \min \{ i \in \N \mid \beta_i \neq 0 \}$ (the first non-zero entry). Define $\rev(\beta) \vDash_0 n$ as $\rev(\beta)_i = \beta_{\ell(\beta) - s(\beta) - i}$, which is essentially the reverse of $\beta$, supported in the same interval. It is clear that $\rev(\rev(\beta)) = \beta$, so $\rev$ is bijective.

\section{Symmetric functions}
\label{sec:symmetricfunctions}

The standard reference for Macdonald polynomials is Macdonald's book \cite{Macdonald1995Book}. For some reference on modified Macdonald polynomials, plethystic substitution, and Delta operators, we have \cite{Haglund2008Book} and \cite{BergeronGarsiaHaimanTesler1999IdentitiesPositivityConjectures}. As a reference for Theta and Xi operators, we have \cite{DAdderioIraciVandenWyngaerd2021ThetaOperators} and \cite{IraciRomero2022DeltaTheta}. Finally, for the super nabla operator, see \cite{BergeronHaglundIraciRomero2023SuperNablaOperator}.

We briefly recall the few definitions we need. Let $\Lambda$ be the algebra of symmetric functions over $\Q(q,t)$; the set $\{ \Ht_\mu[X; q,t] \mu \vdash n, n \in \N \}$ of (modified) Macdonald polynomials is a basis of $\Lambda$ as a vector space.

Set $M=(1-q)(1-t)$, and for any $\mu$ define \[ \Pi_\mu = \prod_{c \in \mu / (1)} \left( 1-q^{a'(c)} t^{l'(c)} \right) \qquad \text{ and } \qquad B_\mu = \sum_{c \in \mu} q^{a'(\mu)} t^{l'(\mu)}, \] where $a'(c)$ and $l'(c)$ denote the \emph{co-arm} and \emph{co-leg} of the cells of $\mu$ (see \Cref{fig:limbs}).

\begin{figure}[!ht]
	\centering
	\begin{tikzpicture}[scale=.52]
		\draw[gray] (0,0) grid (9,1);
		\draw[gray] (0,1) grid (8,2);
		\draw[gray] (0,2) grid (7,3);
		\draw[gray] (0,3) grid (7,4);
		\draw[gray] (0,4) grid (4,5);
		\draw[gray] (0,5) grid (4,6);
		\draw[gray] (0,6) grid (3,7);
		\draw[gray] (0,7) grid (2,8);
		\node at (2.5,3.5) {$c$};
		\fill[blue, opacity=.4] (0,3) rectangle (2,4) node[midway, opacity=1, black]{co-arm};
		\fill[blue, opacity=.4] (3,3) rectangle (7,4) node[midway, opacity=1, black]{arm};
		\fill[blue, opacity=.4] (2,4) rectangle (3,7) node[midway, opacity=1, black, rotate=90]{leg};
		\fill[blue, opacity=.4] (2,3) rectangle (3,0) node[midway, opacity=1, black, rotate=90]{co-leg};
	\end{tikzpicture}
	\caption{Limbs and co-limbs of a cell in a partition.}
	\label{fig:limbs}
\end{figure}
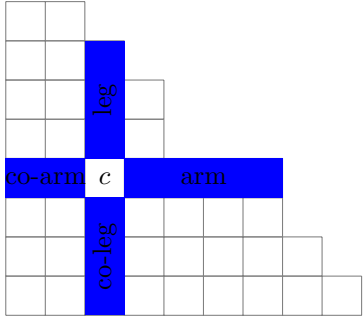

We define the linear operators $\PPi, \Delta_{e_1} \colon \Lambda \rightarrow \Lambda$ on the basis on Macdonald polynomials by

\[ \PPi \Ht_\mu = \Pi_\mu \Ht_\mu \qquad \text{ and } \qquad \Delta_{e_1} \Ht_\mu \coloneqq B_\mu \Ht_\mu; \]

then we can define $\Xi \colon \Lambda \rightarrow \Lambda$ as $\Xi F = M \Delta_{e_1} \PPi F[X/M]$, where the square brackets denote the \emph{plethystic substitution}. Finally, we define $\nabla_\ast \colon \Lambda \rightarrow \Lambda \otimes \Lambda$ by \[ \nabla_\ast \Ht_\mu \coloneqq \Ht_\mu \otimes \Ht_\mu. \]

It is convenient to think of $\Lambda \otimes \Lambda$ as the algebra of symmetric functions on two sets of variables $X$ and $Y$, meaning that they are independently symmetric in each set. With this notation, the specialisation of {\cite[Proposition~5.2]{BergeronHaglundIraciRomero2023SuperNablaOperator}} to $k=1$ states as follows.

\begin{theorem}[{\cite[Proposition~5.2]{BergeronHaglundIraciRomero2023SuperNablaOperator}}]
	\[ \left. \nabla_\ast e_n \right\rvert_{t=1} = \sum_{\pi \in \LD^2(n)} q^{\area(\pi)} x^{\alpha(\pi)} y^{\beta(\pi)}. \]
\end{theorem}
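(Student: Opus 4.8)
The plan is to derive the statement by combining two results that precede it: the algebraic decomposition $\nabla_\ast e_n = \sum_{\lambda \vdash n} m_\lambda \otimes \Xi e_\lambda$, recorded in the introduction, and the Iraci--Romero combinatorial formula for $\Xi e_\lambda\rvert_{t=1}$ in terms of Dyck paths weighted by the area statistic. The guiding principle is that the first tensor factor (the $X$-variables) should bookkeep the $m_\lambda$-index, hence the $x$-content of a $2$-labelled path, while the second factor (the $Y$-variables) carries the monomial expansion of each $\Xi e_\lambda\rvert_{t=1}$.

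First I would make the decomposition explicit. Writing $e_n = \sum_\mu c_\mu \Ht_\mu$ and applying $\nabla_\ast \Ht_\mu = \Ht_\mu \otimes \Ht_\mu$ gives $\nabla_\ast e_n = \sum_\mu c_\mu\, \Ht_\mu \otimes \Ht_\mu$; pairing the first factor against $h_\lambda$ in the Hall inner product and using $\langle m_\lambda, h_\mu \rangle = \delta_{\lambda\mu}$ reduces the decomposition to the claim that the $Y$-symmetric function $\sum_\mu c_\mu \langle \Ht_\mu, h_\lambda \rangle\, \Ht_\mu$ equals $\Xi e_\lambda$. Expanding $e_\lambda[X/M] = \sum_\mu d_{\lambda\mu} \Ht_\mu$ and using that $\PPi$ and $\Delta_{e_1}$ act diagonally gives $\Xi e_\lambda = \sum_\mu M\,\Pi_\mu B_\mu\, d_{\lambda\mu}\, \Ht_\mu$, so everything comes down to the scalar identity $c_\mu \langle \Ht_\mu, h_\lambda \rangle = M\,\Pi_\mu B_\mu\, d_{\lambda\mu}$, a standard consequence of Macdonald reciprocity.

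With the decomposition in hand, I would extract the coefficient of $x^\alpha y^\beta$ from both sides. On the left, $[x^\alpha] m_\lambda$ is nonzero precisely when $\lambda = \mathrm{sort}(\alpha)$, so $[x^\alpha y^\beta]\,\nabla_\ast e_n\rvert_{t=1} = [y^\beta]\, \Xi e_{\mathrm{sort}(\alpha)}\rvert_{t=1}$, which the Iraci--Romero formula expresses as the $q^{\area}$-generating function of a family of ($Y$-)labelled Dyck paths whose labels have content $\beta$. On the right, the same coefficient is $\sum_{\pi \in \LD^2(\alpha, \beta)} q^{\area(\pi)}$. The task is thus to produce a weight-preserving bijection between the Iraci--Romero paths and the elements of $\LD^2(\alpha, \beta)$, under which the single $Y$-labelling is enriched by an $X$-labelling of content $\alpha$ so that the two labellings together satisfy the $2$-labelled condition: a column with a single east step must be strictly increasing in at least one of $\pi_x, \pi_y$.

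The main obstacle will be this reconciliation. The delicate point is that the Iraci--Romero coefficient depends on $\alpha$ and $\beta$ only through their sorted versions, whereas $\sum_{\pi \in \LD^2(\alpha,\beta)} q^{\area(\pi)}$ is defined for arbitrary compositions; proving the equality therefore includes showing that this path count is invariant under separately rearranging the entries of $\alpha$ and of $\beta$, which is exactly the independent symmetry of $\nabla_\ast e_n\rvert_{t=1}$ made combinatorial. The coupling constraint between consecutive $x$- and $y$-labels is what forbids assigning the two labellings independently, and the crux is to verify that summing over all admissible pairs of labellings, for every rearrangement of the contents, reproduces the Iraci--Romero weighted count without over- or under-counting.
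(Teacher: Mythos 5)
First, a point of order: the paper does not prove this statement at all --- it is imported verbatim as the $k=1$ specialisation of Proposition~5.2 of the super nabla paper and used as a black box, so there is no internal proof to compare yours against. Any proof you give is therefore necessarily ``different from the paper's''; the question is only whether it is complete.

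On its own merits, your outline has a genuine gap. The reduction via $\nabla_\ast e_n = \sum_{\lambda\vdash n} m_\lambda\otimes \Xi e_\lambda$ is a sensible starting point (though note that this identity is itself only quoted in the paper, as Proposition~5.9 of the same reference, and your one-line derivation of the scalar identity from ``Macdonald reciprocity'' is essentially the content of that proposition rather than a routine computation). After extracting the coefficient of $x^\alpha y^\beta$ you are left with the claim that $\sum_{\pi\in\LD^2(\alpha,\beta)} q^{\area(\pi)}$ equals the coefficient of $y^\beta$ in the Iraci--Romero expansion of $\Xi e_{\lambda}\rvert_{t=1}$, where $\lambda$ is the decreasing rearrangement of $\alpha$. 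You correctly flag this as ``the crux'', but you do not prove it: you neither construct the weight-preserving bijection nor establish that the $\LD^2(\alpha,\beta)$ count is invariant under independently rearranging $\alpha$ and $\beta$. Reconciling a single labelling subject to $\alpha$-dependent structural constraints with the coupled condition that every column containing exactly one east step be strictly increasing in at least one of $\pi_x,\pi_y$ is essentially the entire content of the theorem, and the danger of over- or under-counting that you mention is real, not merely a bookkeeping concern. As written, the proposal reformulates the statement as an open combinatorial identity rather than proving it.
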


Now, recall the identity \cite[Proposition~5.9]{BergeronHaglundIraciRomero2023SuperNablaOperator}, stating \[ \nabla_\ast e_n = \sum_{\lambda \vdash n} m_\lambda \otimes (\Xi e_\lambda). \] Let us also recall \cite[Conjecture~3]{DAdderioIraciLeBorgneRomeroVandenWyngaerd2022TieredTrees}.

\begin{conjecture}
	\[ \left. \Xi e_\beta \right\rvert_{t=1} = \sum_{T \in \RTT_0(\beta)} q^{\inv(T)} x^{\alpha(T)}, \] where, in our notation, \[\RTT_0(\beta) = \bigcup_{\alpha \vDash_0 \lvert \beta \rvert} \RTT_0(\alpha, \beta). \]
\end{conjecture}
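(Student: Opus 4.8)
The plan is to reduce the symmetric-function identity to a purely combinatorial equidistribution and then to realise that equidistribution by an explicit statistic-preserving bijection. First I would combine the three ingredients already available: the super nabla formula $\nabla_\ast e_n\rvert_{t=1}=\sum_{\pi\in\LD^2(n)}q^{\area(\pi)}x^{\alpha(\pi)}y^{\beta(\pi)}$, the expansion $\nabla_\ast e_n=\sum_{\lambda\vdash n}m_\lambda\otimes(\Xi e_\lambda)$, and the fact that $\nabla_\ast e_n\rvert_{t=1}$ is symmetric in the $Y$ variables. Extracting the coefficient of the dominant monomial $y^\lambda$ (equivalently of $m_\lambda$) from both expressions yields the Dyck-path formula $\Xi e_\beta\rvert_{t=1}=\sum_{\pi\in\LD^2(\cdot,\beta)}q^{\area(\pi)}x^{\alpha(\pi)}$, where $\LD^2(\cdot,\beta)\coloneqq\bigcup_\alpha\LD^2(\alpha,\beta)$; the $Y$-symmetry shows the right-hand side depends only on the multiset of parts of $\beta$, so we are free to replace $\beta$ by any convenient rearrangement. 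Comparing with the conjecture's right-hand side and extracting the coefficient of $x^\alpha$ reduces the whole statement to the following claim: for all weak compositions $\alpha,\beta$ there is a rearrangement $\sigma(\beta)$ with $\#\{T\in\RTT_0(\alpha,\beta):\inv(T)=k\}=\#\{\pi\in\LD^2(\alpha,\sigma(\beta)):\area(\pi)=k\}$ for every $k$.

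I would prove this equidistribution bijectively. The goal is a family of maps $\Phi\colon\RTT_0(\alpha,\beta)\to\LD^2(\alpha,\sigma(\beta))$, with $\sigma=\rev$ the natural candidate (this is presumably why the reverse level $\lv'$ and the map $\rev$ are introduced), such that $\Phi$ preserves the label/$x$-composition $\alpha$ and satisfies $\area(\Phi(T))=\inv(T)$. The $q=0$ case already pins $\Phi$ down on the ground states, since inversion-free trees must map to area-zero paths and conversely. The plan for general $q$ is to build $\Phi$ recursively, peeling a distinguished object from each side---on the path side the first return to the diagonal or the topmost north step, on the tree side a distinguished leaf or the vertex of maximal label---so that its removal decreases $\area$, respectively $\inv$, in a controlled way. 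One then checks that the increment contributed to $\inv$ upon reinserting the peeled vertex (governed by conditions (1)--(3) of \Cref{def:inv}, i.e.\ by which descendants are compatible with the new parent and by the label/level tie-break) matches exactly the increment contributed to $\area$ upon reinserting the corresponding step, so that $\Phi$ intertwines the two recursions and carries $\inv$ to $\area$ cell by cell.

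The main obstacle is precisely this exact matching of statistics for all $q$, rather than merely at $q=0$. The $n=2$ computation already shows that no bijection can preserve the finer pairing of labels, so $\Phi$ cannot be defined label-by-label and must be sensitive only to the level class (the partition underlying $\beta$); reconciling this with the descendant-and-compatibility structure defining $\inv$ is delicate, because a single reinsertion on the tree can create several inversions at once, whereas area accrues one unit per cell, so the peeling order must be chosen to make these two counts agree. I expect the bulk of the work to be an induction verifying that the chosen peeling is well defined (the distinguished vertex exists and its removal keeps the forest tiered) and statistic-compatible, the base case of this induction being exactly the $q=0$ bijection. As a fallback, should an explicit $\Phi$ prove too rigid, I would instead show that $\sum_{T\in\RTT_0(\alpha,\beta)}q^{\inv(T)}$ and $\sum_{\pi\in\LD^2(\alpha,\rev(\beta))}q^{\area(\pi)}$ satisfy the same recurrence under the peeling operation, which would establish the equidistribution---and hence the conjecture---without committing to a single canonical bijection.
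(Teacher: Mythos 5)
Your first paragraph is sound and matches the setup the paper itself uses: combining the $2$-labelled Dyck path formula for $\nabla_\ast e_n\rvert_{t=1}$ with the identity $\nabla_\ast e_n=\sum_\lambda m_\lambda\otimes(\Xi e_\lambda)$, and invoking $Y$-symmetry, correctly reduces the conjecture to the claim that $\inv$ on $\RTT_0(\alpha,\beta)$ is equidistributed with $\area$ on $\LD^2(\alpha,\sigma(\beta))$ for a suitable rearrangement $\sigma(\beta)$. But that reduction is where any honest proof attempt currently has to stop: the statement you were given is a \emph{conjecture}, and the paper does not prove it --- it proves only the $q=0$ specialisation (\Cref{thm:main}), via the explicit depth-first-exploration bijection $\phi$ of \Cref{def:map} and its inverse $\psi$, restricted to $\zerotrees$ and $\zeropaths$. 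Your second and third paragraphs do not close the gap: they describe a hoped-for recursive construction (peel a first return / topmost north step on the path side, a distinguished leaf or maximal-label vertex on the tree side) and then say ``one then checks'' that the increments to $\inv$ and $\area$ match. That check is precisely the open problem, and nothing in your sketch makes it plausible that it goes through: a single reinsertion on the tree side can create an uncontrolled number of inversions at once (since condition (2) of \Cref{def:inv} couples the new vertex to the parents of \emph{all} its ancestors), whereas your path-side peeling changes $\area$ by an amount determined locally by column heights, and you give no mechanism forcing these to agree. You acknowledge this obstacle yourself, and your fallback (showing both generating functions satisfy the same recurrence) is equally unestablished --- no recurrence is written down, let alone verified on both sides.

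Two smaller inaccuracies are worth flagging. First, the claim that ``the $q=0$ case already pins $\Phi$ down on the ground states'' is false as stated: inversion-free trees mapping to area-zero paths constrains only which fibers correspond, not the bijection itself; the paper's $q=0$ map required a carefully chosen exploration order (children visited from largest to smallest label, ties broken lowest tier first) together with the nontrivial \Cref{prop:inv_iff_prec} and \Cref{lem:prec_not_compatible} to verify well-definedness, and injectivity and surjectivity each take real work (\Cref{prop:injective}, \Cref{lem:inverse}). Second, your guess $\sigma=\rev$ is consistent with the paper's $q=0$ result but is itself unproven for general $q$. In short: your reduction is correct and is the same one motivating the paper, but what remains after it is the entire content of the conjecture, which neither you nor the paper resolves beyond $q=0$; a proof proposal for the full statement therefore does not exist here, only a research program.
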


By multiplying each term by the monomial quasisymmetric function $M_\beta$ and taking the sum over $\beta \vDash_0 n$, we can restate the conjecture as follows.

\begin{conjecture}
	\label{conj:tree-conjecture}
	\[ \left. \nabla_\ast e_n \right\rvert_{t=1} = \sum_{T \in \RTT_0(n)} q^{\inv(T)} x^{\alpha(T)} y^{\beta(T)}. \]
\end{conjecture}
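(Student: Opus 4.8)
The plan is to prove \Cref{conj:tree-conjecture} bijectively, after replacing its left-hand side by the Dyck-path model already at our disposal. By the specialisation of \cite[Proposition~5.2]{BergeronHaglundIraciRomero2023SuperNablaOperator} recalled above we have $\left.\nabla_\ast e_n\right\rvert_{t=1}=\sum_{\pi\in\LD^2(n)}q^{\area(\pi)}x^{\alpha(\pi)}y^{\beta(\pi)}$, so \Cref{conj:tree-conjecture} is equivalent to the purely combinatorial identity
\[ \sum_{\pi\in\LD^2(n)}q^{\area(\pi)}x^{\alpha(\pi)}y^{\beta(\pi)}=\sum_{T\in\RTT_0(n)}q^{\inv(T)}x^{\alpha(T)}y^{\beta(T)}. \]
The natural target is a bijection $\Phi\colon\RTT_0(n)\to\LD^2(n)$ with $\area(\Phi(T))=\inv(T)$, $\alpha(\Phi(T))=\alpha(T)$ and $\beta(\Phi(T))=\beta(T)$. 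Since, as the introduction points out, no map can preserve the individual label-level pairs, I would instead read the two labels of each north step of $\Phi(T)$ from the two coordinates of a vertex of $T$ in a way that keeps the label composition intact but alters the level composition by the fixed involution $\rev$: concretely, take $w(v)$ as the $x$-label and the reverse level $\lv'(v)$ as the $y$-label, so that $\alpha(\Phi(T))=\alpha(T)$ and $\beta(\Phi(T))=\rev(\beta(T))$. This is exactly what the reverse level $\lv'$ and the involution $\rev$ were introduced for.

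To see that this weaker control on the level composition suffices, I would invoke the symmetry of $\left.\nabla_\ast e_n\right\rvert_{t=1}$ in the $Y$-variables. Because $\rev$ is a fixed involution, $\Phi$ restricts, for each pair $(\alpha,\beta)$, to a bijection $\RTT_0(\alpha,\beta)\to\LD^2(\alpha,\rev(\beta))$ intertwining $\inv$ and $\area$, whence $\sum_{T\in\RTT_0(\alpha,\beta)}q^{\inv(T)}=\sum_{\pi\in\LD^2(\alpha,\rev(\beta))}q^{\area(\pi)}$. On the other hand $\rev(\beta)$ has the same multiset of parts as $\beta$, and the $Y$-symmetry of $\left.\nabla_\ast e_n\right\rvert_{t=1}$ forces $\sum_{\pi\in\LD^2(\alpha,\gamma)}q^{\area(\pi)}$, which is the coefficient of $x^\alpha y^\gamma$ on the Dyck-path side, to depend only on the partition underlying $\gamma$. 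Combining the two identifies $\sum_{T\in\RTT_0(\alpha,\beta)}q^{\inv(T)}$ with the $x^\alpha y^\beta$-coefficient of the Dyck-path side for every $\alpha$ and $\beta$; since these are exactly the coefficients of the tree side, this is \Cref{conj:tree-conjecture}.

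The whole difficulty is thus concentrated in building $\Phi$ so that $\area\circ\Phi=\inv$ at every value of the statistic. In the base stratum $q=0$ the target is forced: a path of area $0$ is the staircase, which I may regard as a sequence of label pairs that strictly increase in at least one coordinate at each step. I would linearise the vertices of $T\in\zRTT_0(n)$ in a canonical traversal order and output the pairs $(w(v),\lv'(v))$ as the successive north-step labels; the tiered conditions of \Cref{def:tiered-rooted-tree}, read along this order, should yield precisely the coordinatewise-increase condition of the staircase, the vanishing of $\inv$ being what certifies admissibility and makes the procedure reversible, the parent structure being rebuilt from the recorded levels.

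The main obstacle, and the reason I expect only the case $q=0$ to be within reach, is accounting for positive area, where every inversion of $T$ must correspond to exactly one cell between $\Phi(T)$ and the diagonal. Here the clean monotonicity of the staircase is lost: the inversion count of \Cref{def:inv}, governed by the compatibility relation $\comp$ and by ancestry in the tree, obeys a recursion that does not transparently mirror the area recursion of Dyck paths, encoded by an area vector satisfying $c_{i+1}\le c_i+1$. I would try to synchronise them by peeling off a canonical vertex --- say the one of largest label, with ties broken by level --- together with a matching north step, and then showing inductively that the induced increments of $\inv$ and $\area$ agree; short of such a transparent correspondence, an alternative is to isolate a recursion for $\left.\Xi e_n\right\rvert_{t=1}$ that is valid simultaneously in both combinatorial models. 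Making this synchronisation work for all $q$ is the crux, and it is precisely the step left open here by the restriction to $q=0$.
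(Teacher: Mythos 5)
Your proposal matches the paper's actual argument for this statement essentially step for step: the paper likewise proves only the $q=0$ case, by combining the Dyck-path expansion of $\nabla_\ast e_n \rvert_{t=1}$ with a depth-first-exploration bijection $T \mapsto \left( (w(v), \lv'(v)) \right)_v$ between $\zerotrees$ and area-zero paths (\Cref{thm:main}), which preserves $\alpha$ and sends $\beta$ to $\rev(\beta)$, and then absorbing the $\rev$ through the $Y$-symmetry of $\nabla_\ast e_n \rvert_{t=1}$, exactly as you outline. The only caveat is one of emphasis: the coordinatewise-increase condition on the output sequence is \emph{not} a consequence of the tiered conditions of \Cref{def:tiered-rooted-tree} alone but relies essentially on $\inv(T)=0$ (the paper's multi-case analysis via \Cref{prop:inv_iff_prec,lem:prec_not_compatible}, and likewise the nontrivial inverse construction of \Cref{def:inverse}), a dependence your sketch correctly acknowledges but does not carry out.
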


In the remainder of the paper, we prove \Cref{conj:tree-conjecture} when $q=0$.

\section{The bijection}
\label{sec:bijection}

This section is dedicated to the proof of our main result, which is the following statement.

\begin{theorem}
	\label{thm:main}
	There exists a bijection $\phi \colon \zerotrees \rightarrow \zeropaths$ such that $\alpha(\phi(T)) = \alpha(T)$ and $\beta(\phi(T)) = \rev(\beta(T))$.
\end{theorem}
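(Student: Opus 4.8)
The plan is to construct $\phi$ explicitly together with an inverse, working inside a fixed class so that $\phi$ restricts to a bijection $\zRTT_0(\alpha,\beta)\to\zLD^2(\alpha,\rev(\beta))$ for every pair $\alpha,\beta$. As noted in the remark, an element of $\zeropaths$ is nothing but a sequence of pairs $((a_i,b_i))_{i\in[n]}$ with $a_i<a_{i+1}$ or $b_i<b_{i+1}$ for all $i\le n-1$, whose $a$-values have multiset recorded by $\alpha$ and whose $b$-values have multiset recorded by $\rev(\beta)$; so the task is to turn a no-inversion tree into exactly such a sequence. First I would fix a canonical linear order $v_1,\dots,v_n$ of the non-root vertices, say a depth-first traversal visiting the children of each vertex in increasing label order with ties broken by level, and read the labels off it, setting $a_i=w(v_i)$; this makes $\alpha(\phi(T))=\alpha(T)$ automatic. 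The reverse level $\lv'$ is the device producing the required $\beta\mapsto\rev(\beta)$ twist, since recording reverse levels rather than levels turns the level multiset of $T$ into its reverse, matching $\rev(\beta)$.

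The key subtlety is flagged by the paper itself: preserving individual label pairs is impossible already for $n=2$. In our terms this means the naive vertexwise assignment $v_i\mapsto(w(v_i),\lv'(v_i))$ cannot be the bijection, as it would match the multiset of tree pairs $(w,\lv)$ with the multiset of path pairs up to a fixed reversal of the second coordinate, which is exactly the pair-preserving behaviour that is ruled out. Consequently $\phi$ must decouple the two coordinates: the label and the reverse level recorded at step $i$ should come from different vertices of the traversal, for instance by reading the reverse levels with a one-step shift relative to the labels, with a boundary convention chosen so that the $b$-multiset is still $\rev(\beta)$. With $a_i=w(v_i)$ and $b_i$ a reverse level, the path condition $a_i<a_{i+1}$ or $b_i<b_{i+1}$ unwinds into the requirement that consecutive entries never simultaneously satisfy $w(v_i)\ge w(v_{i+1})$ and $\lv(v_i)\le\lv(v_{i+1})$, and the heart of the forward direction is to show that $\inv(T)=0$ is precisely what forbids such a consecutive pattern.

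To prove bijectivity I would build the inverse directly: scanning the sequence left to right, attach each new vertex to a canonically determined ancestor dictated by the compatibility relation $\comp$ and the tiered conditions (1)--(3) of \Cref{def:tiered-rooted-tree}, and then check that the resulting tree is inversion-free. Because an area-$0$ path peels trivially from its right end, whereas $\lv'$ depends on the global level range $[l,L]$ that is fixed only once $\beta$ is fixed, it seems cleanest to carry this out either as a direct scan within a single class $\zRTT_0(\alpha,\beta)$, or as an induction on $n$ that tracks how $[l,L]$ and the two compositions change when the last step is deleted.

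The main obstacle is the equivalence sitting at the centre of both directions: showing that the canonical order of a no-inversion tree produces a sequence with no forbidden consecutive pair, and dually that reconstructing a valid area-$0$ sequence yields a tiered tree that is genuinely inversion-free, with the two maps mutually inverse. The real difficulty, created by the impossibility of pair-preservation, is that the correct pairing of labels with reverse levels is not the vertexwise one, so one cannot argue pair-by-pair; instead the shift coupling the two coordinates must be shown to be compatible with the descendant-and-$\comp$ structure defining $\inv$, and it is this compatibility that I expect to demand the most care.
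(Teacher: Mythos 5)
Your overall architecture --- a depth-first reading of the vertices producing a sequence of pairs, with the reverse level $\lv'$ accounting for the $\beta\mapsto\rev(\beta)$ twist, and an inverse built by scanning the sequence and attaching each new vertex to a canonically chosen ancestor on the current rightmost branch --- is exactly the paper's. But there is a concrete wrong turn at what you call the key subtlety. You argue that the vertexwise assignment $v\mapsto(w(v),\lv'(v))$ ``cannot be the bijection'' because it would reproduce the forbidden pair-preserving behaviour, and you therefore propose to decouple the two coordinates by reading the reverse levels with a one-step shift relative to the labels. This is a misreading of the obstruction: what is impossible (already for $n=2$) is a bijection matching the pairs $(w(v),\lv(v))$ themselves, i.e.\ one achieving $\beta(\phi(T))=\beta(T)$; the global reversal $\lv\mapsto\lv'=L+l-\lv$ already changes the multiset of pairs while keeping the multiset of second coordinates equal to $\rev(\beta)$, and that is the entire workaround. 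The paper's map is strictly vertexwise: $(a_i,b_i)=(w(v_{n+1-i}),\lv'(v_{n+1-i}))$, where $v_1,\dots,v_n$ is the depth-first order visiting children from largest to smallest label (ties from lowest to highest tier) and the resulting sequence is read backwards. Introducing a shift would undermine the second half of your own plan, since both the inverse construction and the inversion analysis hinge on each entry of the sequence carrying the full data $(w,\lv')$ of a single vertex.

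Beyond this, the proposal defers every step that actually constitutes the proof. You correctly identify that the heart of the forward direction is showing that $\inv(T)=0$ forbids a consecutive pair with $w(v_i)\ge w(v_{i+1})$ and $\lv(v_i)\le\lv(v_{i+1})$, but you do not prove it; in the paper this requires a total order $\prec$ compatible with the exploration, two auxiliary lemmas relating $\prec$, $\comp$ and inversions, and a multi-case analysis on the position of $v_{i+1}$ relative to the ancestry line of $v_i$. Likewise injectivity (the parent of each newly scanned vertex is uniquely determined as the minimum of an explicit set of admissible ancestors), the verification that the reconstructed tree is tiered and inversion-free, and the identity $\phi\circ\psi=\mathrm{id}$ are all stated as goals rather than established. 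As it stands, the proposal is a plausible outline containing one incorrect design decision, not a proof.
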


\begin{figure}[!ht]
    \centering
    \begin{tikzpicture}
		\tikzset{root/.style={fill=black}, TTnode/.style={circle, draw=black, thick, inner sep=1pt, fill=gray!20}, edge/.style={thick}}
		\node [TTnode, root] (0) at (0,0) {\tiny\color{white}$ 0\,0$};
		\node [TTnode, label=$v_1$] (9) at (-1.5,1) {$\red 3\,\green 3$};
		\node [TTnode, label=$v_2$] (8) at (-0.5,1) {$\red 2\,\green2$};
		\node [TTnode, label=$v_3$] (7) at (0.5,1) {$\red 1\,\green2$};
		\node [TTnode, label=right:$v_4$] (1) at (1.5,1) {$\red 1\,\green3$};
		\node [TTnode, label=left:$v_5$] (2) at (1.5,2) {$\red 4\,\green4$};
		\node [TTnode, label=right:$v_8$] (3) at (2.5,3) {$\red 2\,\green3$};
		\node [TTnode, label=$v_7$] (5) at (1.5,3) {$\red 2\,\green2$};
		\node [TTnode, label=$v_6$] (6) at (0.5,3) {$\red 3\,\green3$};
		\node [TTnode, label=$v_9$] (4) at (2.5,4) {$\red 4\,\green4$};
		\draw[edge] (0)--(1)--(2)--(3)--(4) (5)--(2)--(6) (7)--(0)--(8) (9)--(0);
	\end{tikzpicture}
	\begin{tikzpicture}
		\tikzset{root/.style={fill=black}, TTnode/.style={circle, draw=black, thick, inner sep=1pt, fill=gray!20}, edge/.style={thick}}
		\node [TTnode, root] (0) at (0,0) {\tiny\color{white}$0\,10$};
		\node [TTnode, label=$v_1$] (9) at (-1.5,1) {$\red 3\,\blue 3$};
		\node [TTnode, label=$v_2$] (8) at (-0.5,1) {$\red 2\,\blue4$};
		\node [TTnode, label=$v_3$] (7) at (0.5,1) {$\red 1\,\blue4$};
		\node [TTnode, label=right:$v_4$] (1) at (1.5,1) {$\red 1\,\blue3$};
		\node [TTnode, label=left:$v_5$] (2) at (1.5,2) {$\red 4\,\blue2$};
		\node [TTnode, label=right:$v_8$] (3) at (2.5,3) {$\red 2\,\blue3$};
		\node [TTnode, label=$v_7$] (5) at (1.5,3) {$\red 2\,\blue4$};
		\node [TTnode, label=$v_6$] (6) at (0.5,3) {$\red 3\,\blue3$};
		\node [TTnode, label=$v_9$] (4) at (2.5,4) {$\red 4\,\blue2$};
		\draw[edge] (0)--(1)--(2)--(3)--(4) (5)--(2)--(6) (7)--(0)--(8) (9)--(0);
	\end{tikzpicture}\\$$v_4\prec v_3\prec v_8\prec v_2\prec v_7\prec v_1\prec v_6\prec v_5\prec v_9$$
	\begin{tikzpicture}
		\tikzset{root/.style={fill=black}, TTnode/.style={circle, draw=black, thick, inner sep=1pt, fill=gray!20}, edge/.style={thick}}
		\node [TTnode, root] (0) at (10,0) {\tiny $\color{white} 0\,10$};
		\node [TTnode] (9) at (9,0) {$\red 3\,\blue 3$};
		\node [TTnode] (8) at (8,0) {$\red 2\,\blue4$};
		\node [TTnode] (7) at (7,0) {$\red 1\,\blue4$};
		\node [TTnode] (1) at (6,0) {$\red 1\,\blue3$};
		\node [TTnode] (2) at (5,0) {$\red 4\,\blue2$};
		\node [TTnode] (3) at (2,0) {$\red 2\,\blue3$};
		\node [TTnode] (5) at (3,0) {$\red 2\,\blue4$};
		\node [TTnode] (6) at (4,0) {$\red 3\,\blue3$};
		\node [TTnode] (4) at (1,0) {$\red 4\,\blue2$};
		\draw[thick, bend right=60, dashed, looseness=1] (0) to (1) (0) to (7) (0) to (8) (0) to (9) (1) to (2) (2) to (3) (3) to (4) (2) to (5) (2) to (6);
	\end{tikzpicture}
    \caption{Above, an example of a tree $T\in \zerotrees$ for $n=9$, represented on the left with each vertex $v$ containing the pair $(w(v),\lv(v))$, and on the right the pair $(w(v),\lv'(v))$. The vertices are enumerated in the order $v_1,v_2,\ldots,v_9$ by the exploration from \Cref{def:map}. Below, the total order of \Cref{def:total_order} as it applies to the vertices $v_1,\ldots,v_n$. On the bottom, the pair of sequences $\phi(T)=(a,b)$, in this case given by $((4,2,2,3,4,1,1,2,3),(2,3,4,3,2,3,4,4,3))$; the $i\th$ circle represented contains the pair $(a_i,b_i)=(w(v_{n+1-i}),\lv'(v_{n+1-i}))$.	\label{bij1}}
\end{figure}

\subsection{From trees to sequences}

First, we construct a map $\phi \colon \zerotrees \rightarrow (\mathbb{N}_+^n)^2$.

\begin{definition}
	\label{def:map}
	Given $T \in \zerotrees$, build $\phi(T)$ by visiting its vertices according to a depth-first exploration, where children of a vertex are visited from the largest to the smallest label, and in case of a tie, from the lowest to the highest tier (remember that two sibling vertices cannot have both the same label and the same tier). See \Cref{bij1} for an example. According to this exploration started at the root $r$ of $T$, enumerate the vertices of the tree as $v_0 = r, v_1, \ldots, v_n$. Finally, let $\phi(T) = ((w(v_{n+1-i}))_{i\in [n]}, (\lv'(v_{n+1-i}))_{i\in [n]})$, where $\lv'$ is as defined in \Cref{sec:trees}.
\end{definition}

The following order will be useful.

\begin{definition}
	\label{def:total_order}
	Given $T \in \RTT(n)$ and two vertices $u, v$ of $T$, we say that $u \prec v$ if:
	\begin{enumerate}
		\item $w(u) < w(v)$; or
		\item $w(u) = w(v)$ and $\lv(u) > \lv(v)$; or
		\item $w(u) = w(v)$, $\lv(u) = \lv(v)$, and $u\neq v$ is an ancestor of $v$; or
		\item $w(u) = w(v)$, $\lv(u) = \lv(v)$, $u$ and $v$ belong to different branches, and if $p^a(u) = p^b(v)$ is their least common ancestor, then $p^{b-1}(v) \prec p^{a-1}(u)$.
	\end{enumerate}
   We write $u\preceq v$ to mean that $u=v$ or $u\prec v$.
   
	This defines a total order on the vertices of $T$, such that the children of a vertex $v$ are explored in reverse order with respect to $\prec$. 
\end{definition}

Note that, if we land in condition $4$, then since $p^{a-1}(u)$ and $p^{b-1}(v)$ are siblings, they must have different labels or be in different tiers: in other words, they cannot satisfy condition 3 or condition 4, and can be compared according to $\prec$ by simply checking conditions 1 and 2. Moreover, the last two conditions are equivalent to saying that $u$ is explored before $v$ in $T$ in \Cref{def:map}.

\begin{remark}
	Compare this order with the one from \cite[Definition~3.16]{DAdderioIraciLeBorgneRomeroVandenWyngaerd2022TieredTrees}. They are not the same, but the two orders agree on each branch of $T$, and if two vertices belong to different branches, they cannot possibly form an inversion. It follows that standardising with respect to this order also preserves inversions, so this is also a reading order on the vertices of $T$ and it gives the same fundamental quasisymmetric expansion.
\end{remark}

Let us first state and quickly prove a couple of preliminary results.

\begin{lemma}
	\label{prop:inv_iff_prec}
	If $v$ is a descendant of $u$ such that $v \comp p(u)$, then $(u,v)$ is an inversion if and only if $v \prec u$. 
\end{lemma}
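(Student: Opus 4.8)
The plan is to unpack the three defining conditions of an inversion from \Cref{def:inv} and compare them directly against the four clauses defining $\prec$ in \Cref{def:total_order}, under the standing hypotheses that $v$ is a descendant of $u$ and $v \comp p(u)$. Since these two hypotheses are exactly conditions (1) and (2) of the inversion definition, the pair $(u,v)$ is an inversion precisely when condition (3) holds, namely when either $w(v) < w(u)$, or $w(v) = w(u)$ and $\lv(v) > \lv(u)$. So the real content is to show that this disjunction is equivalent to $v \prec u$.

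First I would handle the direction where $w(v) < w(u)$. Here clause (1) of $\prec$ (reading $u \prec v$ as comparing the smaller-labelled vertex first) immediately gives $v \prec u$, and conversely $w(v) < w(u)$ is forced whenever the labels differ and $(u,v)$ is to be an inversion. The more delicate case is $w(v) = w(u)$: then the inversion condition reduces to $\lv(v) > \lv(u)$, while $v \prec u$ with equal labels is governed by clause (2) of $\prec$, which says $v \prec u$ iff $\lv(v) > \lv(u)$. These match verbatim, so in the equal-label case the equivalence is immediate. The one subtlety I would need to address is why clauses (3) and (4) of $\prec$ never intervene: those clauses apply only when $w(v) = w(u)$ and $\lv(v) = \lv(u)$, but if $\lv(v) = \lv(u)$ as well then condition (3) of the inversion fails (it requires strict inequality $\lv(v) > \lv(u)$), so $(u,v)$ is not an inversion, and I must separately check that $v \prec u$ also fails in that situation. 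Since $v$ is a descendant of $u$, if $w(v)=w(u)$ and $\lv(v)=\lv(u)$ then clause (3) of $\prec$ makes $u \prec v$ (as $u$ is an ancestor of $v$), so $v \prec u$ is indeed false, keeping both sides of the equivalence false together.

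I would also want to verify consistency between the compatibility hypothesis and the label comparison: the assumption $v \comp p(u)$ together with $v$ being a descendant of $u$ does not by itself constrain $w(v)$ versus $w(u)$, so no information is lost by treating the label trichotomy exhaustively. The cleanest write-up splits into the three cases $w(v) < w(u)$, $w(v) > w(u)$, and $w(v) = w(u)$: in the first, both ``inversion'' and ``$v \prec u$'' hold; in the second, both fail (an inversion requires $w(v) \le w(u)$, and clause (1) gives $u \prec v$); and in the third, both reduce to comparing levels, where the strict-versus-equal dichotomy is handled as above.

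I expect the main obstacle to be purely bookkeeping rather than conceptual: one must be scrupulous about the directional convention in $\prec$ (which vertex is ``first'') and about the boundary case $w(v)=w(u)$, $\lv(v)=\lv(u)$ where the inversion definition's strict inequality and the order's clause (3) must be reconciled so that neither side of the equivalence fires. Because the two definitions were evidently engineered to align, no genuinely hard step arises once the cases are laid out, but care is needed to ensure every combination of equalities and inequalities is covered exactly once.
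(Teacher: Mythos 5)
Your proposal is correct and follows essentially the same route as the paper's (much terser) proof: match inversion condition (3) against clauses (1)--(2) of $\prec$, and dispose of the boundary case $w(u)=w(v)$, $\lv(u)=\lv(v)$ by noting that $u$ being an ancestor of $v$ forces $u \prec v$, so neither side of the equivalence holds. The extra bookkeeping you supply is sound but adds nothing beyond what the paper's two-line argument already covers.
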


\begin{proof}
	This is clear if $(w(u), \lv(u)) \neq (w(v), \lv(v))$; if instead label and level are both equal, then $(u,v)$ is not an inversion, and indeed $v$ is a descendant of $u$, so $u\prec v$ and thus $v \not \prec u$.
\end{proof}

\begin{lemma}
	\label{lem:prec_not_compatible}
	If $u \prec v$ and $u \not \comp v$, then $\lv(v) \leq \lv(u)$.
\end{lemma}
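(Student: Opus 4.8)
The plan is to prove this by a direct case analysis on which of the four conditions in \Cref{def:total_order} witnesses the hypothesis $u \prec v$. The key observation is that the desired conclusion $\lv(v) \leq \lv(u)$ is already guaranteed by the structure of the order in three of the four cases, and only in the remaining case do we actually need to invoke the incompatibility hypothesis $u \not\comp v$. So I would first dispense with the easy cases and then concentrate the real argument on the one case that uses incompatibility.

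First I would handle conditions $2$, $3$, and $4$ of \Cref{def:total_order} together. In all three of these, we have $w(u) = w(v)$. Condition $2$ additionally asserts $\lv(u) > \lv(v)$, which immediately gives $\lv(v) < \lv(u)$; conditions $3$ and $4$ both assert $\lv(u) = \lv(v)$, which immediately gives $\lv(v) = \lv(u)$. In each situation $\lv(v) \leq \lv(u)$ holds, and I would note that these cases do not even require the hypothesis $u \not\comp v$ --- they follow purely from the defining inequalities of the order.

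The only substantive case is condition $1$, where $w(u) < w(v)$. Here I would unpack the definition of compatibility. Recall that $u \comp v$ means that the labels and levels are strictly comonotone, i.e.\ either ($\lv(u) < \lv(v)$ and $w(u) < w(v)$) or ($\lv(u) > \lv(v)$ and $w(u) > w(v)$). Since we are assuming $w(u) < w(v)$, the second disjunct is impossible, and the first disjunct would hold as soon as $\lv(u) < \lv(v)$. Thus the hypothesis $u \not\comp v$ rules out $\lv(u) < \lv(v)$, forcing $\lv(u) \geq \lv(v)$, which is exactly $\lv(v) \leq \lv(u)$. This completes all cases.

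I do not anticipate a genuine obstacle here: the lemma is essentially a bookkeeping statement about the interaction of the order $\prec$ with the compatibility relation. The one point requiring care is correctly negating the definition of $\comp$ in condition $1$ --- specifically recognising that, under $w(u) < w(v)$, incompatibility is equivalent to $\lv(u) \geq \lv(v)$ --- rather than mishandling the two disjuncts. Everything else is immediate from reading off the inequalities in \Cref{def:total_order}.
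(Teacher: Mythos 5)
Your proof is correct and follows essentially the same route as the paper's: reduce the hypothesis $u \prec v$ to the dichotomy ``$w(u) < w(v)$, or $w(u) = w(v)$ with $\lv(v) \leq \lv(u)$,'' dispose of the latter immediately, and use $u \not\comp v$ only in the former case to exclude $\lv(u) < \lv(v)$. The only difference is presentational --- you enumerate the four conditions of \Cref{def:total_order} explicitly where the paper collapses conditions 2--4 into one observation.
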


\begin{proof}
	Since $u \prec v$, either we have $w(u) < w(v)$, or $w(u) = w(v)$ and $\lv(v) \leq \lv(u)$. In the latter case, we are done. If $w(u) < w(v)$, since $u \not \comp v$, we cannot have $\lv(u)<\lv(v)$; thus $\lv(v) \leq \lv(u)$, as wanted.
\end{proof}

We can now show that the map from \Cref{def:map} is indeed a well-defined map from $\zerotrees$ to $\zeropaths$.

\begin{proposition}
	Let $\phi$ be as in \Cref{def:map}. For $T \in \zerotrees$, if $\phi(T) = (a,b)$, then for $i \leq n-1$ we must have $a_i < a_{i+1}$ or $b_i < b_{i+1}$. In particular, we have \[ \phi \colon \zerotrees \rightarrow \zeropaths. \]
	
	Moreover, $\alpha(\phi(T))=\alpha(T)$ and $\beta(\phi(T))=\rev(\beta(T))$.
\end{proposition}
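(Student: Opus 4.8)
The plan is to verify the two assertions separately, with essentially all the work going into the defining inequality of $\zeropaths$. Writing $j = n+1-i$, the pair $(a_i,b_i)$ records $(w(v_j),\lv'(v_j))$, and since $\lv'(v) = L+l-\lv(v)$ reverses the order of levels, the condition ``$a_i<a_{i+1}$ or $b_i<b_{i+1}$'' is exactly the assertion that each consecutive pair in the exploration order satisfies $w(v_j)<w(v_{j-1})$ or $\lv(v_j)>\lv(v_{j-1})$; call this the \emph{descent} at $(v_{j-1},v_j)$. I would prove the descent for every $j\in\{2,\dots,n\}$ by splitting, according to the depth-first exploration of \Cref{def:map}, into the case where $v_j$ is a child of $v_{j-1}$ and the case where the exploration backtracks before reaching $v_j$.

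When $v_j$ is a child of $v_{j-1}$, the edge condition of \Cref{def:tiered-rooted-tree} makes $v_j$ compatible with its parent, so $w$ and $\lv$ move in the same direction along the edge: either $w(v_j)<w(v_{j-1})$ (first inequality) or $w(v_j)>w(v_{j-1})$, which forces $\lv(v_j)>\lv(v_{j-1})$ (second inequality). In the backtracking case, $v_j$ is the sibling of $u\coloneqq p^{k-1}(v_{j-1})$ explored immediately after $u$, where $p\coloneqq p(v_j)=p^{k}(v_{j-1})$; since by \Cref{def:total_order} siblings are explored in reverse $\prec$-order we have $v_j\prec u$, and $v_{j-1}$ is the last vertex of the subtree of $u$, reached by repeatedly descending to the $\prec$-minimal child.

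For the backtracking case I would argue by contradiction, assuming the descent fails, i.e.\ $w(v_j)\geq w(v_{j-1})$ and $\lv(v_j)\leq\lv(v_{j-1})$; then $v_{j-1}\not\comp v_j$, and after separately excluding (by the no-inversion hypothesis) the degenerate possibility that $v_{j-1}$ and $v_j$ share both label and level, one reads off $v_{j-1}\prec v_j$. Combined with $v_j\prec u$ this gives $v_{j-1}\prec u$ while $v_{j-1}$ is a descendant of $u$; since $\inv(T)=0$, \Cref{prop:inv_iff_prec} then forces $v_{j-1}\not\comp p$ (otherwise $(u,v_{j-1})$ would be an inversion), so the compatible case is already contradictory and the real content lies in $v_{j-1}\not\comp p$. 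The decisive step is to exploit $v_j\comp p$ together with $v_{j-1}\not\comp p$: the compatibility of the child $v_j$ with $p$ places $\lv(v_j)$ strictly on one side of $\lv(p)$, and I would use the no-inversion property along the chain $u=x_0,x_1,\dots,x_m=v_{j-1}$ of $\prec$-minimal children — taking the deepest $x_s$ still compatible with $p$ and applying \Cref{lem:prec_not_compatible} past the point where compatibility with $p$ is lost — to place $\lv(v_{j-1})$ on the opposite side, contradicting $\lv(v_j)\leq\lv(v_{j-1})$.

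The second assertion is routine. The sequence $a$ is a permutation of the multiset $\{w(v):v\neq r\}$, so $\alpha(\phi(T))_i=\#\{v:w(v)=i\}=\alpha(T)_i$; likewise $b$ lists the values $\lv'(v)=L+l-\lv(v)$, whence $\beta(\phi(T))_i=\#\{v:\lv(v)=L+l-i\}=\beta(T)_{L+l-i}$, which is $\rev(\beta(T))_i$ by the definition of $\rev$ as the reversal of $\beta(T)$ on its support $[l,L]$ — a direct check against the definitions of $\lv'$ and $\rev$. I expect the main obstacle to be precisely the incompatible subcase of the backtracking case: there $v_j\prec v_{j-1}$ can genuinely fail, so the descent survives only through the level inequality, and extracting it requires combining the no-inversion property at the correct ancestor with \Cref{lem:prec_not_compatible}, rather than any single local comparison.
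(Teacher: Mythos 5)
Your reduction to the descent condition, the child case, the identification of $u$ and $p$, and the derivation of $v_{j-1}\prec u$ and hence (via \Cref{prop:inv_iff_prec}) $v_{j-1}\not\comp p$ all match the paper's argument, as does your treatment of the statement about $\alpha$ and $\beta$. The gap is in the decisive step. Under your contradiction hypothesis $w(v_{j-1})\leq w(v_j)$ and $\lv(v_j)\leq\lv(v_{j-1})$, the pair of facts $v_j\comp p$ and $v_{j-1}\not\comp p$ does \emph{not} place $\lv(v_{j-1})$ on the opposite side of $\lv(p)$ from $\lv(v_j)$: in either orientation of $v_j\comp p$ they force $w(v_{j-1})\leq w(p)$ and $\lv(v_{j-1})\geq\lv(p)$, hence $v_{j-1}\prec p$, and \Cref{lem:prec_not_compatible} then only returns $\lv(p)\leq\lv(v_{j-1})$, which is consistent with everything assumed. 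Concretely, take $(w,\lv)$ equal to $(3,3)$ on $p$, $(4,4)$ on $u$, $(2,3)$ on $v_{j-1}$ (a child of $u$) and $(2,2)$ on $v_j$: every constraint visible to your argument holds --- all edges are compatible, $v_j\prec u$, $v_{j-1}\prec u$, $v_{j-1}\not\comp p$, and no pair of vertices inside the subtree of $p$ forms an inversion --- yet the descent fails. So no contradiction can be extracted from the subtree of $p$, and in particular not from your descending chain $u=x_0,\dots,x_m=v_{j-1}$ of $\prec$-minimal children, which in this example has no interior vertices at all.

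The actual obstruction lives \emph{above} $p$: in the example, any way of completing the ancestral line from the root down to $p$ creates an inversion somewhere on that line. The paper (Cases 2.2.1--2.2.3) finds the contradiction by walking \emph{up} the ancestral line of $v_{j-1}$, from $u$ through $p$ and beyond, to the first ancestor $x$ satisfying $x\prec v_{j-1}$ whose parent is an ancestor of $v_j$; such an $x$ exists because the root $\prec$-precedes everything. When that $x$ sits strictly above $p$ (which, as shown above, is exactly where your contradiction hypothesis lands you), one obtains a parent--child pair $x,x'$ on the line with $x\prec v_{j-1}\prec x'$, both incompatible with $v_{j-1}$ by the no-inversion property, and \Cref{lem:prec_not_compatible} applied in both directions gives $\lv(x')\leq\lv(v_{j-1})\leq\lv(x)<\lv(x')$. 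This upward search, of unbounded depth, is the crux of the proof and is missing from your sketch; the chain you propose runs in the wrong direction.
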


\begin{proof}
	Suppose $\phi(T) = (a, b)$. Given $i \in [n-1]$, the pairs $(a_{n+1-i}, b_{n+1-i})$ and $(a_{n-i},b_{n-i})$ represent the respective label and reverse tier of two vertices $v_i$ and $v_{i+1}$ of $T$ that are visited one immediately after the other ($v_{i+1}$ right after $v_i$) by the depth-first exploration of $T$ that defines $\phi$.
	
	We need to either show that $w(v_{i+1}) < w(v_i)$, or that $\lv(v_i) < \lv(v_{i+1})$; if $w(v_{i+1}) < w(v_i)$ then we are done, so from now on we shall assume $w(v_i) \leq w(v_{i+1})$ and aim to show that $\lv(v_i) < \lv(v_{i+1})$.

	\paragraph{Case 1} $v_{i+1}$ is a descendant of $v_i$. By construction, it must be a child of $v_i$, and therefore such that $v_i \comp v_{i+1}$. The labels of two compatible vertices must be different, so in particular $w(v_i) < w(v_{i+1})$, and since $v_i \comp v_{i+1}$ it follows that $\lv(v_i) < \lv(v_{i+1})$, as desired.

	\begin{figure}
		\centering
		\begin{tikzpicture}
			\tikzset{root/.style={fill=black}, TTnode/.style={circle, draw=black, thick, inner sep=1pt, fill=gray!20}, edge/.style={thick}}
			\fill[gray!10, rounded corners] (-1.5,5.5) rectangle (1,3.5);
			\node [TTnode] (a) at (-1,4) {$\red ?\,\green ?$};
			\node [TTnode] (b) at (0.5,4) {$\red ?\,\green ?$};
			\draw[red, dashed] (a) to node [midway, above] {$\not\comp$} (b);
			\node [TTnode] (c) at (-1,5) {$\red ?\,\green ?$};
			\node [TTnode] (d) at (0.5,5) {$\red ?\,\green ?$};
			\draw[blue, ->] (c) to node [midway, above] {$\prec$} (d);
			\node [TTnode, label=left:{$v_{i}=z$}] (1) at (0,1) {$\red a\,\green b$};
			\node [TTnode, label=above:$v_{i+1}$] (2) at (1,1) {$\red c\,\green d$};
			\node [TTnode, label=left:$v$] (0) at (0,0) {$\red ?\,\green ?$};
			\draw[edge] (2)--(0)--(1);
			\draw[blue,->, bend right] (2) to (1);
			\node at (0,-1) {$a=c, b<d$};
			\node at (0,-2) {\textbf{Case 2.1}};
		\end{tikzpicture}
		\begin{tikzpicture}
			\tikzset{root/.style={fill=black}, TTnode/.style={circle, draw=black, thick, inner sep=1pt, fill=gray!20}, edge/.style={thick}}
			\node [TTnode, label=left:{$v_{i}$}] (1) at (0,1) {$\red a\,\green b$};
			\node [TTnode, label=above:$v_{i+1}$] (2) at (1,-1) {$\red c\,\green d$};
			\node [TTnode, label=left:{$x=z$}] (3) at (0,-1) {$\red e\,\green f$};
			\node (4) at (0,0) {$\ldots$};
			\node [TTnode, label=left:$v$] (0) at (0,-2) {$\red ?\,\green ?$};
			\draw[edge] (2)--(0) (0)--(3)--(4)--(1);
			\draw[blue,->, bend right] (2) to (3);
			\draw[blue,->, bend right] (3) to (1);
			\node at (0,-3) {$c= e=a$}; 
			\node at (0,-3.5) {$b\leq f<d$};
			\node at (0,-4) {\textbf{Case 2.2.1}};
		\end{tikzpicture}
		\begin{tikzpicture}
			\tikzset{root/.style={fill=black}, TTnode/.style={circle, draw=black, thick, inner sep=1pt, fill=gray!20}, edge/.style={thick}}
			\node [TTnode, label=left:{$v_{i}$}] (1) at (0,1) {$\red a\,\green b$};
			\node [TTnode, label=above:$v_{i+1}$] (2) at (1,-1) {$\red c\,\green d$};
			\node [TTnode, label=left:{$z$}] (3) at (0,-1) {$\red e\,\green f$};
			\node (4) at (0,0) {$\ldots$};
			\node [TTnode, label=left:{$x=v$}] (0) at (0,-2) {$\red g\,\green h$};
			\draw[edge] (2)--(0) (0)--(3)--(4)--(1);
			\node at (0,-3) {$g\leq a\leq c$};
			\node at (0,-3.5) {$b\leq h<d$};
			\draw[red, dashed, bend right] (1) to (0);
			\draw[blue, bend left,->] (1) to (3);
			\draw[blue, bend left=60,->] (0) to (1);
			\node at (0,-4) {\textbf{Case 2.2.2}};
		\end{tikzpicture}
		\begin{tikzpicture}
			\tikzset{root/.style={fill=black}, TTnode/.style={circle, draw=black, thick, inner sep=1pt, fill=gray!20}, edge/.style={thick}}
			\node [TTnode, label=above:{$v_{i}$}] (1) at (0,0.3) {$\red a\,\green b$};
			\node [TTnode, label=above:$v_{i+1}$] (2) at (1,-1) {$\red ?\,\green ?$};
			\node [TTnode, label=right:{$z$}] (3) at (0,-1) {$\red ?\,\green ?$};
			\node (4) at (0,-.3) {$\ldots$};
			\node [TTnode, label=right:{$v$}] (0) at (0,-2) {$\red ?\,\green ?$};
			\node (8) at (0,-2.8) {$\ldots$};
			\node [TTnode, label=above right:{$x''$}] (7) at (0,-3.4) {$\red c\,\green d$};
			\node [TTnode, label=above right:{$x'$}] (6) at (0,-4.4) {$\red e\,\green f$};
			\node [TTnode, label=right:{$x$}] (5) at (0,-5.4) {$\red g\,\green h$};
			\draw[edge] (2)--(0) (0)--(3)--(4)--(1) (5)--(6)--(7)--(8)--(0);
			\draw[red, dashed, bend right] (1) to (6);
			\draw[red, dashed, bend right] (1) to (5);
			\draw[blue,->, bend left=90] (1) to (7);
			\draw[blue,->, bend left=90] (1) to (6);
			\draw[blue,->, bend left=45] (5) to (1);
			\node at (0,-6.4) {$h<f\leq b\leq h$};
			\node at (0,-6.9) {$\bot$};
			\node at (0,-7.4) {\textbf{Case 2.2.3}};
		\end{tikzpicture}
		\caption{\label{fig:case2} A summary of the proof that, in the case where $v_{i+1}$ is not a child of $v_i$, the inequality $w(v_i) \leq w(v_{i+1})$ implies $\lv(v_i) < \lv(v_{i+1})$. A red dashed line between two vertices represents the fact that they cannot be compatible; a blue arrow from vertex $u$ to vertex $v$ signifies that $u\prec v$.}
	\end{figure}

		\paragraph{Case 2} $v_{i+1}$ is \emph{not} a descendant of $v_i$ (see \Cref{fig:case2}). The fact that it is visited immediately after $v_i$ implies that it must be a child of some ancestor $v$ of $v_i$. Let $z$ be the child of $v$ that is also a weak ancestor of $v_i$; note that, since $z$ is visited before $v_{i+1}$, we must have $v_{i+1} \prec z$.

		\subparagraph{Case 2.1} $z = v_i$. Since $v_{i+1} \prec z = v_i$, we must have $w(v_{i+1}) \leq w(v_i)$, so equality holds and $\lv(v_i) < \lv(v_{i+1})$, as desired.

		\subparagraph{Case 2.2} $z \neq v_i$. Since $v_{i+1} \prec z$, we have $w(v_i) \leq w(v_{i+1}) \leq w(z)$. Let $x$ be the first ancestor of $v_i$ such that $p(x)$ is also an ancestor of $v_{i+1}$ and $x \prec v_i$ (if no labelled ancestor is found, let $x=r$ be the root of $T$).

		\subparagraph{Case 2.2.1} $x = z$. Then $w(v_{i+1}) \leq w(z)=w(x) \leq w(v_i)$, so since $w(v_i) \leq w(v_{i+1})$ they must all be equal. By definition, $\lv(v_i) \leq \lv(z) < \lv(v_{i+1})$, as desired.

		\subparagraph{Case 2.2.2} $x = v$. Then $w(v) \leq w(v_i) \leq w(v_{i+1})$, but $v$ and $v_{i+1}$ are connected so actually $w(v) < w(v_{i+1})$ and also $\lv(v) < \lv(v_{i+1})$. Now recall that $v_i$ is a descendant of $z$, and notice that, by definition of $x$, $v_i \prec z$. By \Cref{prop:inv_iff_prec}, $v_i \not \comp p(z) = v$ (otherwise $(z, v_i)$ would be an inversion). Now \Cref{lem:prec_not_compatible} implies $\lv(v_i) \leq \lv(v) < \lv(v_{i+1})$, as desired.

		\subparagraph{Case 2.2.3} $x$ is an ancestor of $v$. Let $x'$ be the child of $x$ who is an ancestor of $v_i$ (it can be $v$), and $x''$ its child with the same property (it can be $z$). The vertex $x''$ is an ancestor of $v_i$ such that its parent $x'$ is an ancestor of $v_{i+1}$, so by definition of $x$ we must have $v_i \prec x''$. This implies that $v_i \not \comp p(x'') = x'$, otherwise $(x'', v_i)$ would be an inversion. The same argument applies to $x'$, so we also have $v_i \not \comp x$.
		
		Again by definition of $x$, we have $x \prec v_i \prec x'$, so $w(x) \leq w(x')$; but $x$ and $x'$ are connected, so $w(x) < w(x')$ and thus $\lv(x) < \lv(x')$. Now, by \Cref{lem:prec_not_compatible}, we have both $\lv(x') \leq \lv(v_i)$ (by looking at $v_i \prec x'$) and $\lv(v_i) \leq \lv(x)$ (by looking at $x\prec v_i$), a contradiction.
	\bigskip

	We have thus established that $\phi(T)$ is indeed an element of $\zeropaths$; the fact that $\alpha(\phi(T))=\alpha(T)$ and $\beta(\phi(T))=\rev(\beta(T))$ is obvious by construction.
\end{proof}

The time has now come to prove the injectivity of $\phi$.

\begin{proposition}
	\label{prop:injective}
    The map $\phi \colon \zerotrees \to \zeropaths$ is injective.
\end{proposition}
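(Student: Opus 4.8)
The plan is to prove injectivity by showing that the depth-first reading sequence determines the tree, i.e.\ by reconstructing the parent relation from $\phi(T)$. First I would recover the reading data: reversing $\phi(T)=(a,b)$ gives the pairs $(w(v_k),\lv'(v_k))$ in exploration order, and since $\max_k \lv'(v_k)=L$ and $\min_k \lv'(v_k)=l$, the identity $\lv(v_k)=L+l-\lv'(v_k)$ recovers the levels; prepending the root $v_0=r$ with pair $(0,0)$, the task reduces to recovering the function $p$. I would then suppose, for contradiction, that two distinct trees $T,T'\in\zerotrees$ satisfy $\phi(T)=\phi(T')$. Both have the same reading sequence, so I argue by induction on the exploration: if $T$ and $T'$ place the vertices $v_0,\dots,v_k$ with identical parents, then both depth-first searches are in the same state, namely along a common active path $u_0=r,u_1,\dots,u_m=v_k$, and the incoming vertex $v_{k+1}$ carries the same pair $(w,\lv)$ in both. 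Let $k$ be the first index at which the two placements differ: $v_{k+1}$ is attached to $u_j$ in $T$ and to $u_{j'}$ in $T'$ with $j\neq j'$.

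Assume without loss of generality $j<j'$, so that $v_{k+1}$ is attached strictly deeper in $T'$. The key observations come from the shallower tree $T$. Since $u_j=p_T(v_{k+1})$ is an edge of the tiered tree, we have $v_{k+1}\comp u_j$; as $\comp$ depends only on labels and levels, this holds as a fact about the common sequence and is therefore valid in $T'$ too. Moreover, in $T$ the vertices $v_{k+1}$ and $u_{j+1}$ are siblings (both children of $u_j$), and $u_{j+1}$ is explored before $v_{k+1}$; since siblings are explored in decreasing order for $\prec$ (\Cref{def:total_order}), this gives $v_{k+1}\prec u_{j+1}$. Crucially, by condition (3) of \Cref{def:tiered-rooted-tree} two siblings cannot share both label and level, so conditions 3--4 of $\prec$ do not apply to the pair $v_{k+1},u_{j+1}$ and the comparison is decided by labels and levels alone; hence $v_{k+1}\prec u_{j+1}$ is again a fact about the common sequence, valid in $T'$.

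Now I transport these two facts to $T'$ and produce a forbidden inversion. In $T'$ the vertex $u_{j+1}$ is an ancestor of $u_{j'}=p_{T'}(v_{k+1})$ (because $j'\ge j+1$), hence an ancestor of $v_{k+1}$; and $p_{T'}(u_{j+1})=u_j$, since $T$ and $T'$ agree on the common prefix. Thus $v_{k+1}$ is a descendant of $u_{j+1}$ with $v_{k+1}\comp p(u_{j+1})=u_j$, so \Cref{prop:inv_iff_prec} applies and, because $v_{k+1}\prec u_{j+1}$, the pair $(u_{j+1},v_{k+1})$ is an inversion of $T'$. This contradicts $T'\in\zerotrees$. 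The case $j>j'$ is symmetric, with the roles of $T$ and $T'$ exchanged (using that $u_{j'}=p_{T'}(v_{k+1})$ and $v_{k+1}\prec u_{j'+1}$ in $T'$ to manufacture the inversion $(u_{j'+1},v_{k+1})$ in $T$). In either case we reach a contradiction, so $j=j'$, there is no first divergence, and $T=T'$; this establishes injectivity.

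The step I expect to be the most delicate is the transport of the comparison $v_{k+1}\prec u_{j+1}$ from the shallow tree to the deep one, since $\prec$ depends on the tree through its ancestry conditions: the argument works only because the two vertices cannot tie on both coordinates, which reduces the comparison to conditions 1--2 and makes it tree-independent. I would take care to verify this reduction, to handle the boundary case where the shallow placement is the deepening $j=m$ (so that $v_{k+1}$ is a child of $v_k$) within the same framework, and to confirm that the inductive hypothesis genuinely guarantees a common active path before the first divergence. Once these points are checked, the whole injectivity argument rests on the single structural observation that a strictly deeper attachment always creates the inversion $(u_{j+1},v_{k+1})$, which is exactly where the hypothesis $\inv=0$ is indispensable.
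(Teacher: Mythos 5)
Your proof is correct and follows essentially the same route as the paper: both arguments reconstruct the parent of the incoming vertex along the active ancestry line and show that attaching it any deeper than the shallowest admissible position would create an inversion $(u_{j+1},v_{k+1})$ via \Cref{prop:inv_iff_prec}, using that the comparison $v_{k+1}\prec u_{j+1}$ between siblings is decided by labels and levels alone. The only difference is presentational --- you phrase it as a first-divergence contradiction between two trees, while the paper phrases it as the parent index being forced to equal $\min(S\cap S')$ --- and your checks of the delicate points (tree-independence of $\prec$ for siblings, the boundary case $j=m$) are exactly the ones needed.
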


\begin{proof}
	Let $T, T'$ be tiered trees in $\zerotrees$ such that $\phi(T) = \phi(T')$. The depth-first exploration order given by $\phi$ induces a bijection $f$ between the vertices of $T$ and the vertices of $T'$, and the fact that $\phi(T) = \phi(T')$ implies that, for each vertex $v$ of $T$, $w(f(v)) = w(v)$ and $\lv'(f(v)) = \lv'(v)$. What we wish to show is that, if we know the labels and tiers of the vertices of a tree in $\zerotrees$, as well as the order in which they are visited by the exploration, the tree structure is uniquely determined.

	Indeed, suppose we have determined the tree structure of the subtree on vertices visited up to the $k\th$ step; let $v_0, v_1, \ldots, v_s$ be the ancestry line in $T$ of the $k\th$ visited vertex, where $v_0$ is the root, $v_s$ is the vertex in question and $v_{i+1}$ is a child of $v_i$. All of these vertices have been visited by the exploration up to step $k$. Now, the vertex $v$ visited at step $k+1$, which has label $w(v)$ and reverse tier $\lv'(v)$, must be a child of $v_i$ for some $i \in \{0,\ldots,s\}$. We claim we are able to uniquely determine its parent $p(v)$.

	Suppose that $p(v) = v_i$ for some $i<s$. In this case, $v_{i+1}$ is a sibling of $v$ that has been explored before $v$, so $v \prec v_{i+1}$. Moreover, for all $j \leq i$ such that $v \prec v_j$, since $v$ is a descendant of $v_j$, by \Cref{prop:inv_iff_prec} we must have $v \not \comp p(v_j)$ (or $(v_j,v)$ would be an inversion).

	Consider the sets \[ S = \{0 \leq j < s \mid v \prec v_{j+1} \}\cup\{s\}, \qquad S' = \{0 \leq j \leq s \mid v_j \comp v \}; \]
	by the above, assuming $p(v)=v_i$, we must have $i\in S$, and of course we must also have $i\in S'$; we thus have $S\cap S'\neq \emptyset$. Now, letting $m$ be the minimum of $S \cap S'$, we claim that $m$ is the \emph{only} possible index of $p(v)$. Indeed, if $j > m$ is in $S \cap S'$, then having $p(v)=v_j$ would lead to $v_{m+1}$ and $v$ forming an inversion: we would have $m+1\leq j\leq s$, so $v$ would be a descendant of $v_{m+1}$; moreover, we would have $v \prec v_{m+1}$ (because $m\in S\setminus\{s\}$) as well as $v_m\comp v$ (because $m\in S'$).
	
	This proves that there is only (at most) one possibility for $p(v)$, as required.
\end{proof}

Finally, in order to show that the map $\phi$ is a bijection between $\zerotrees$ and $\zeropaths$, we describe its inverse explicitly.

\subsection{The inverse construction: from sequences to trees}

We now define a map $\psi \colon \zeropaths \rightarrow \zerotrees$, which we will show to be the inverse of $\phi$. 

\begin{definition}
	\label{def:inverse}
    Given $D=(a,b)\in\zeropaths$, build a rooted labelled tree $T=\psi(D)$ as follows. The vertices of $T$ are a root $v_0$, and $v_1,\ldots,v_n$, with $w(v_i)=a_{n+1-i}$, $\lv'(v_i)=b_{n+1-i}$, that is, $\lv(v_i)=L+l-b_{n+1-i}$, where $L=\max_{1\leq i\leq n}b_i$ and $l=\min_{1\leq i\leq n}b_i$. Consider the total order on $v_1,\ldots,v_n$ given by $v_i\prec_* v_j$ if
    \begin{enumerate}
    \item $w(v_i) < w(v_j)$; or
    \item $w(v_i) = w(v_j)$ and $\lv(v_i) > \lv(v_j)$; or
    \item $w(v_i) = w(v_j)$, $\lv(v_i) = \lv(v_j)$, and $i<j$.
    \end{enumerate}
    
    The tree structure is built inductively (see \Cref{fig:inverse}): for $k\geq 0$ (such that $k<n$), the vertices $v_0,\ldots,v_k$ induce a subtree $T_k$ of $T$ such that $v_{i_0}=v_0,v_{i_1},\ldots,v_{i_s}=v_k$ is the ancestral line of $v_k$; given $T_k$, the tree $T_{k+1}\supset T_k$ is obtained by setting the parent of $v_{k+1}$ to be $v_{i_m}$, where
    \[ m = \min \left( \{0 \leq j < s \mid v_{k+1} \comp v_{i_j}, v_{k+1} \prec_* v_{i_{j+1}} \} \cup \{s\} \right). \]
\end{definition}

Clearly, \Cref{def:inverse} does build a tree structure on the vertex set $\{v_0,\ldots,v_n\}$, as the minimum is taken in a nonempty set at each step. Note, in fact, how the construction is essentially the same as the one described in the proof of \Cref{prop:injective}. 

Indeed, the total order introduced in \Cref{def:inverse} is the same as the one from \Cref{def:total_order}, which we now prove. Here we are committing a slight abuse of notation: we have not yet shown $\psi(D)$ to be in $\zerotrees$; however, the order from \Cref{def:total_order} can be defined on any rooted tree with labels $w$ and $\lv$ on the vertices.

\begin{figure}
	\centering
	\begin{tikzpicture}
	\tikzset{root/.style={fill=black}, TTnode/.style={circle, draw=black, thick, inner sep=1pt, fill=gray!20}, edge/.style={thick}}
		\node [TTnode, root, label=below:$v_0$] (0) at (10,0) {\tiny $\color{white} 0\,10$};
		\node [TTnode, label=below:$v_1$] (9) at (9,0) {$\red 3\,\blue 3$};
		\node [TTnode, label=below:$v_2$] (8) at (8,0) {$\red 2\,\blue4$};
		\node [TTnode, label=below:$v_3$] (7) at (7,0) {$\red 1\,\blue4$};
		\node [TTnode, label=below:$v_4$] (1) at (6,0) {$\red 1\,\blue3$};
		\node [TTnode, label=below:$v_5$] (2) at (5,0) {$\red 4\,\blue2$};
		\node [TTnode, label=below:$v_8$] (3) at (2,0) {$\red 2\,\blue3$};
		\node [TTnode, label=below:$v_7$, fill=red!10] (5) at (3,0) {$\red 2\,\blue4$};
		\node [TTnode, label=below:$v_6$] (6) at (4,0) {$\red 3\,\blue3$};
		\node [TTnode, label=below:$v_9$] (4) at (1,0) {$\red 4\,\blue2$};
		\draw[thick, bend right=60, looseness=1] (0) to (1) (0) to (7) (0) to (8) (0) to (9) (1) to (2) (2) to (6);
		\draw[bend left=60, looseness=1,red] (5) to (6) (5) to (2) (5) to (1) (5) to (0);
		\draw[bend left=60, looseness=1,red, ultra thick] (5) to (2);
		\draw[thick, bend right=60, looseness=1, dashed] (3) to (4) (2) to (3);
	\end{tikzpicture}
	\caption{\label{fig:inverse} The 7$\th$ step in the construction of $T=\psi(D)$, where vertices contain their values for $w$ and $\lv'$: the vertex $v_7$ is being added to the tree $T_6$ on $v_0,\ldots,v_6$, whose edges are drawn in black. The possible parents of $v_7$ are the vertices in the ancestry line of $v_6$, that is, $v_6, v_5, v_4, v_0$, which we have joined to $v_7$ with thin red edges. However, we have $v_4\prec_* v_7$ (because $w(v_4)<w(v_7)$), so $p(v_7)\neq v_0$, and $v_7\not\comp v_4$ (because $w(v_7)<w(v_4)$ but $\lv(v_7)>\lv(v_4)$). Instead, we have $p(v_7)=v_5$, because $v_7\comp v_5$ and $v_7\prec_* v_6$. Subsequently added edges are in black, dashed. Note that, if the vertices are placed in the order $v_0,\ldots, v_n$, from right to left, the procedure naturally draws $T$ in a planar way: each step consists in joining $v_{k+1}$ to a parent on the rightmost branch of $T_k$.}
\end{figure}
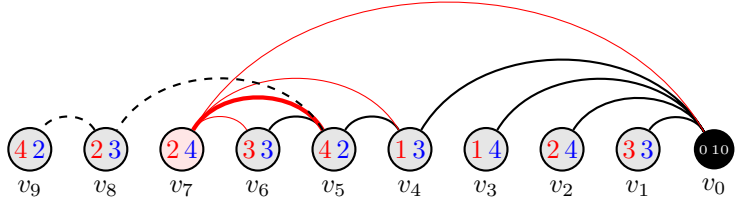

\begin{lemma}
	Given $D=(a,b)\in\zeropaths$, in the labelled tree $T=\psi(D)$ with vertices $v_0,\ldots, v_n$ (numbered as in \Cref{def:inverse}) we have $v_i\prec_* v_j$ if and only if $v_i \prec v_j$, where the latter condition is the one from \Cref{def:total_order}.
\end{lemma}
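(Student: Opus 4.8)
The plan is to compare $\prec_*$ (from \Cref{def:inverse}) and $\prec$ (from \Cref{def:total_order}) condition by condition. Conditions~1 and~2 of the two definitions are literally identical, so the two orders already agree on every pair $v_i, v_j$ with $(w(v_i),\lv(v_i)) \neq (w(v_j),\lv(v_j))$. It therefore remains to fix a \emph{tie class} --- a set of vertices sharing both a common label and a common level --- and to show that, within such a class, $i < j$ holds if and only if $v_i \prec v_j$ as decided by conditions~3 and~4 of \Cref{def:total_order}. Note that, for two distinct vertices in a tie class, exactly one of the following holds: $v_i$ is an ancestor of $v_j$, $v_j$ is an ancestor of $v_i$, or they lie on distinct branches; I will treat these possibilities in turn.

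First I will record two structural features of the inductive construction of $T = \psi(D)$. \emph{(a)} The parent of $v_{k+1}$ always lies on the ancestral line $v_{i_0}=v_0, v_{i_1}, \dots, v_{i_s} = v_k$, hence has index at most $k$; by transitivity, every ancestor of a vertex has a strictly smaller index. \emph{(b)} Reading \Cref{def:inverse} as a stack process --- attaching $v_{k+1}$ to $v_{i_m}$ amounts to popping $v_{i_{m+1}}, \dots, v_k$ and pushing $v_{k+1}$ --- one sees that a vertex, once removed from the stack, never returns, so the subtree rooted at any $x$ occupies a contiguous block of indices, beginning at the index of $x$. Property~\emph{(a)} settles the ancestor case immediately: if $v_i$ is an ancestor of $v_j$, then condition~3 yields $v_i \prec v_j$ while \emph{(a)} yields $i < j$, and symmetrically if $v_j$ is an ancestor of $v_i$.

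Next I will analyse siblings. When $v_{k+1}$ is attached to $v_{i_m}$ with $m < s$, the construction forces $v_{k+1} \prec_* v_{i_{m+1}}$, and by~\emph{(b)} the vertex $v_{i_{m+1}}$ is exactly the most recently added child of $v_{i_m}$, namely the one whose still-open subtree contains $v_k$. Since $v_{k+1}$ has a larger index than $v_{i_{m+1}}$, the relation $v_{k+1} \prec_* v_{i_{m+1}}$ cannot arise from condition~3, hence arises from conditions~1 or~2; thus consecutive children of a common parent have distinct pairs $(w,\lv)$ and strictly decrease, in order of addition, for the key $(w, -\lv)$. By transitivity, all children of a fixed vertex have pairwise distinct $(w,\lv)$ --- so that condition~4 of $\prec$, which compares two such siblings, is unambiguous on $\psi(D)$ --- and for any two siblings the one added earlier is the $\prec$-larger one.

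Finally I will combine these facts to settle the different-branch case, which is the heart of the argument. Let $v_i, v_j$ lie in the same tie class on distinct branches with least common ancestor $\ell$, and let $u'$ (resp.\ $w'$) be the child of $\ell$ that is a weak ancestor of $v_i$ (resp.\ $v_j$). By~\emph{(b)} the subtrees rooted at $u'$ and $w'$ are disjoint contiguous index intervals containing $i$ and $j$ respectively, so $i < j$ holds if and only if $u'$ is added before $w'$; by the sibling analysis this is equivalent to $w' \prec u'$, which is precisely the statement of condition~4 for $v_i \prec v_j$. This gives $i < j \iff v_i \prec v_j$ on every tie class, and together with the agreement on conditions~1 and~2 it proves $\prec_* \,=\, \prec$. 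I expect the main obstacle to be property~\emph{(b)}: establishing the depth-first, contiguous-interval structure of the construction and using it to transfer the index comparison of the deep vertices $v_i, v_j$ to the $\prec$-comparison of the shallow vertices $u', w'$.
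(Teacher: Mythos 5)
Your proof is correct and follows essentially the same route as the paper's: both rest on showing that the children of any vertex are attached in decreasing order for $\prec_*$ (hence for $\prec$, since the index comparison rules out condition~3), and that each subtree occupies a contiguous block of indices, which reduces the different-branch case to a comparison of siblings. The only cosmetic difference is that you verify the equivalence in each case directly, whereas the paper proves the single implication $v_i \prec v_j \Rightarrow v_i \prec_* v_j$ and concludes by totality of the two orders.
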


\begin{proof}
	First of all, if $v_i$ and $v_j$ are siblings in $T$ with parent $v_k$, then $i<j$ if and only if $v_j\prec v_i$ if and only if $v_j\prec_* v_i$; in other words, the children of any vertex $v_k$ are added to $T$ in decreasing order according to both $\prec_*$ and $\prec$ by the procedure of \Cref{def:inverse}.

	Indeed, consider the set $C=\{c<j \mid p(v_c)=v_k\}$ and let $M=\max C$, so that $v_M$ is the latest child of $v_k$ before $v_j$. Since $v_k=p(v_j)$, the vertex $v_k$ belongs to the ancestry line of $v_{j-1}$. But $v_M$ also belongs to the ancestry line of $v_{j-1}$: indeed, it belongs to the ancestry line of $v_l$ for all $l$ between $M$ and $j-1$, as the parent selected for $v_l$ cannot be a strict ancestor of $v_M$ (otherwise we would have an extra child of $v_k$, or $v_k$ itself would not be in the ancestry line of $v_{j-1}$). It follows that, since the parent of $v_j$ is determined to be $v_k$, we have $v_j\prec_*v_M$. Since $j>M$, we must have $w(v_j)<w(v_M)$ or $w(v_j)=w(v_M)$ and $\lv(v_j)>\lv(v_M)$, so we also have $v_j\prec v_M$.

	Now, suppose $i\neq j$ are integers in $[n]$; if $w(v_i)<w(v_j)$ or $w(v_i)=w(v_j)$ and $\lv(v_i)>\lv(v_j)$, then $v_i\prec_* v_j$ and $v_i\prec v_j$.

	Assume from now on that $w(v_i)=w(v_j)$ and $\lv(v_i)=\lv(v_j)$. If $v_i$ is an ancestor of $v_j$, then $i< j$, so again we have both $v_i\prec_* v_j$ and $v_i\prec v_j$. Finally, assume $v_i$ and $v_j$ belong to different branches, and let $p^a(v_i)=p^b(v_j)=v_k$ be their least common ancestor. Set $v_{i'}=p^{a-1}(v_i)$, $v_{j'}=p^{b-1}(v_j)$; the two vertices $v_{i'}$ and $v_{j'}$ are siblings. 

	If $v_i\prec v_j$, which amounts to $v_{j'}\prec v_{i'}$, then we must have $i'<j'$. But this implies that $i<j'<j$ (no descendants $v_l$ of $v_{i'}$ can have index $l> j'$, as $v_{i'}$ would not be in the ancestry line of $v_{l-1}$, and all descendants $v_l$ of $v_{j'}$ have $l>j'$), so we also have $v_i\prec_* v_j$.

	In summary, we have shown that $v_i\prec v_j$ implies $v_i\prec_* v_j$; since the orders are total on $v_1,\ldots, v_n$, this establishes they are the same.
\end{proof}

The fact that the image of $\psi$ is a tiered tree and has zero inversions is now rather easy.

\begin{proposition}
	Given $D=(a,b)\in\zeropaths$, the labelled tree $T=\psi(D)$ is in $\zerotrees$.
\end{proposition}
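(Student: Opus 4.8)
The plan is to verify the two defining requirements of $\zerotrees$ separately: that $T = \psi(D)$ is a tiered rooted tree in the sense of \Cref{def:tiered-rooted-tree}, and that $\inv(T) = 0$. I will argue by induction on $k$ that each $T_k$ from \Cref{def:inverse} is a tiered rooted tree, and then read off $\inv(T)=0$ at the end. Throughout I identify $\prec_*$ with the order $\prec$ of \Cref{def:total_order} via the previous lemma, and I translate the defining property of $D \in \zeropaths$ into the following statement about consecutive vertices: for every $k$, either $w(v_{k+1}) < w(v_k)$ or $\lv(v_{k+1}) > \lv(v_k)$ (this is just $a_i < a_{i+1}$ or $b_i < b_{i+1}$ rewritten through $w(v_i) = a_{n+1-i}$ and $\lv(v_i) = L + l - b_{n+1-i}$).

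For the tiered conditions, the sibling condition (condition~3 of \Cref{def:tiered-rooted-tree}) follows from the previous lemma's observation that the children of any vertex are added in strictly decreasing $\prec_*$-order: two siblings with identical label and level would be $\prec_*$-ordered by their index, yet the immediately earlier one $v_M$ would satisfy $v_{k+1} \prec_* v_M$ while carrying a smaller index, a contradiction. Conditions~1 and~2 together amount to requiring that every edge join two \emph{compatible} vertices, so the task is to show $v_{k+1} \comp p(v_{k+1})$ for all $k$. When the parent is $v_{i_m}$ with $m < s$, this is immediate, since compatibility with $v_{i_m}$ is built into the set defining $m$ in \Cref{def:inverse}.

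The main obstacle is the fallback case $m = s$, where $p(v_{k+1}) = v_k$ and the definition imposes no compatibility a priori; this is precisely where $D \in \zeropaths$ must do the work. I would argue by contradiction: assume $v_{k+1} \not\comp v_k$. Together with the $\zeropaths$ inequality above, this forces $v_{k+1} \prec_* v_k$. I then prove, by \emph{downward} induction on $t$ along the ancestry line $v_{i_0} = v_0, \ldots, v_{i_s} = v_k$ of $v_k$, that $v_{k+1} \not\comp v_{i_t}$ and $v_{k+1} \prec_* v_{i_t}$ for every $t$. The base case $t = s$ is the assumption plus $v_{k+1} \prec_* v_k$. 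For the inductive step, since $m=s$ forces $t \notin S$, the hypothesis $v_{k+1} \prec_* v_{i_{t+1}}$ immediately yields $v_{k+1} \not\comp v_{i_t}$; and then, using that the edge $\{v_{i_t}, v_{i_{t+1}}\}$ is compatible (as $T_k$ is already tiered) together with $v_{k+1} \not\comp v_{i_{t+1}}$, a short case analysis on whether $v_{i_t}$ lies northeast or southwest of $v_{i_{t+1}}$ in the label-level plane gives $v_{k+1} \prec_* v_{i_t}$. Carrying the induction down to $t = 0$ produces $v_{k+1} \not\comp v_0$, contradicting the fact that the root has label and level $0$ while $v_{k+1}$ has positive label and level. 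Hence $v_{k+1} \comp v_k$, every edge is compatible, and the outer induction shows each $T_k$, and in particular $T$, is a tiered rooted tree.

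Finally, $\inv(T) = 0$ follows cleanly from the minimality built into \Cref{def:inverse} together with \Cref{prop:inv_iff_prec}. Given any $u, v$ with $v$ a descendant of $u$ and $v \comp p(u)$, I consider the step at which $v = v_{k+1}$ was added; then $u = v_{i_t}$ and $p(u) = v_{i_{t-1}}$ for some $1 \le t \le m$. Since $t - 1 < m = \min(S \cup \{s\})$, the index $t-1$ lies below $S$ and so fails the defining condition of $S$; as $v_{k+1} \comp v_{i_{t-1}} = p(u)$ holds, the failure must be $v_{k+1} \not\prec_* v_{i_t} = u$, that is, $u \prec v$. By \Cref{prop:inv_iff_prec}, $(u,v)$ is not an inversion, whence $\inv(T) = 0$ and $T \in \zerotrees$. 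The one delicate point is the downward induction resolving the $m = s$ case; the rest is bookkeeping that essentially mirrors the injectivity argument of \Cref{prop:injective}.
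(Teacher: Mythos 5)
Your proof is correct and follows essentially the same route as the paper's: in both, the crux is the fallback case $p(v_{k+1})=v_k$, handled by combining the emptiness of the set defining $m$ with \Cref{lem:prec_not_compatible} and the compatibility of edges already present in $T_k$, propagated along the ancestry line, while the no-inversion claim is read off from the minimality of $m$ via \Cref{prop:inv_iff_prec}. The only differences are organizational: you close the hard case by a contradiction that pushes non-comparability all the way up to the root, where the paper instead locates the ``switch point'' at $p(v_k)$ and reads off compatibility there directly, and you additionally verify the sibling condition (condition~3 of \Cref{def:tiered-rooted-tree}), which the paper's proof leaves implicit.
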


\begin{proof}
	First of all, we wish to show that for all vertices $v_i$ of $T$ other than the root, $v_i\comp p(v_i)$. This is clear from the construction unless $p(v_i)=v_{i-1}$; however, in the latter case we know that $(w(v_i),\lv'(v_i))=(a_{n+1-i},b_{n+1-i})$ and $(w(p(v_i)),\lv'(p(v_i)))=(a_{n+2-i},b_{n+2-i})$. Consider two cases.

	\paragraph{Case 1} $v_{i-1} \prec v_{i}$. Because $D\in\zeropaths$, if $a_{n+2-i}=w(v_{i-1})<w(v_i)=a_{n+1-i}$ then $b_{n+2-i}>b_{n+1-i}$ and thus $\lv(v_{i-1})<\lv(v_i)$, so indeed $v_{i-1}\comp v_i$. If instead $a_{n+2-i}=w(v_{i-1})=w(v_i)=a_{n+1-i}$, the fact that $v_{i} \prec v_{i-1}$ implies that $\lv(v_{i}) < \lv(v_{i-1})$, so $b_{n+1-i}<b_{n+2-i}$, a contradiction.

	\paragraph{Case 2} $v_i \prec v_{i-1}$. In this case, we know that $v_i \not \comp p(v_{i-1})$. Consider the ancestry line \[ v_0 = v_{j_0}, v_{j_1}, \dots, v_{j_l} = v_{i-1}, \] with $v_{j_{s-1}} = p(v_{j_s})$. Let $x$ be the maximum index such that $j_x=0$ or $v_{j_x} \prec v_i$.

	For $y > x$, by definition we have $v_i \prec v_{j_y}$, so since $p(v_i) \neq p(v_{j_y})$ we must have $v_i \not\comp p(v_{j_y})$. If $x \leq l-2$, then this condition holds for $y = x+1$ and $y = x+2$, so in particular $v_i \not \comp v_{j_x} = p(v_{j_{x+1}})$ and $v_i \not \comp v_{j_{x+1}} = p(v_{j_{x+2}})$.

	We have $v_{j_x} \prec v_i \preceq v_{j_{x+1}}$, but $v_{j_x}$ and $v_{j_{x+1}}$ are connected, so $a_{j_x} < a_{j_{x+1}}$ and $b_{j_x} > b_{j_{x+1}}$. By \Cref{lem:prec_not_compatible}, we must have $b_{j_x} \leq b_i \leq b_{j_{x+1}}$, a contradiction.

	It follows that $x=l-1$, so $p(v_{i-1}) = v_{j_{l-1}} \prec v_i$. Again by \Cref{lem:prec_not_compatible} we get $b_{j_{l-1}} \leq b_i$; but $v_{j_{l-1}} \prec v_i \prec v_{i+1}$ and $p(v_{i-1})$ is connected to $v_{i-1}$, so $a_{j_{l-1}} < a_{i-1}$ and $b_{j_{l-1}} > b_{i-1}$. In particular, $b_{i-1} < b_{j_{l-1}} \leq b_i$, so $v_i \comp v_{i-1}$, as desired.

	Finally, $T$ has no inversions by construction: since it is a tiered tree, \Cref{prop:inv_iff_prec} yields that any pair $(u,v)$, where $v$ is a descendant of $u$, cannot form an inversion because either $v \not\comp p(u)$ or $v \not\prec u$.
\end{proof}

The fact that $\psi \colon \zeropaths \to \zerotrees$ is the inverse of $\phi \colon \zerotrees \to \zeropaths$ is now quite clear.

\begin{lemma}
	\label{lem:inverse}
	For $D \in \zeropaths$, $\phi(\psi(D)) = D$.
\end{lemma}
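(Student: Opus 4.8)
The plan is to reduce the identity $\phi(\psi(D)) = D$ to a single structural statement about the tree $T = \psi(D)$: namely, that the depth-first exploration used to define $\phi$ in \Cref{def:map} visits the vertices of $T$ in exactly the order $v_0, v_1, \ldots, v_n$ fixed by the construction of $\psi$ in \Cref{def:inverse}. Once this is established we are done, since $\phi$ outputs $((w(v_{n+1-i}))_{i}, (\lv'(v_{n+1-i}))_{i})$, and by the very definition of $\psi$ we have $w(v_i) = a_{n+1-i}$ and $\lv'(v_i) = b_{n+1-i}$; reading these back in reverse order returns $(a,b) = D$. So the entire content of the lemma is the agreement of the two orderings.

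To prove that agreement, I would first observe that $\psi$ is itself a depth-first build: at each step the new vertex $v_{k+1}$ is attached to a vertex $v_{i_m}$ lying on the ancestry line of the most recently added vertex $v_k$. This has two consequences. First, the numbering $v_0, \ldots, v_n$ is a depth-first pre-order, in the sense that the subtree rooted at any $v_j$ occupies a contiguous block of consecutive indices. Second, by the lemma identifying $\prec_*$ with $\prec$ (whose proof also shows that the children of any vertex are created in decreasing order for both relations), the siblings of $T$ are added in decreasing $\prec$-order; since by the remark following \Cref{def:total_order} the exploration defining $\phi$ visits children in reverse $\prec$-order as well, the two conventions on child order coincide.

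With these two facts in hand I would argue by induction on $k$ that the first $k+1$ vertices visited by $\phi$'s exploration are $v_0, \ldots, v_k$. At the inductive step one distinguishes whether $v_{k+1}$ is a child of $v_k$—in which case it is the first-added, hence largest-$\prec$, child of $v_k$, and the exploration descends to it—or whether $v_{k+1}$ is attached to a strict ancestor $v_{i_m}$ of $v_k$, in which case $v_k$ is necessarily a leaf and the exploration must backtrack. I expect the main obstacle to lie precisely in this backtracking case: one must show that $v_{i_m}$ is exactly the nearest ancestor of $v_k$ still possessing an unvisited child, and that $v_{k+1}$ is that ancestor's largest-$\prec$ unvisited child. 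The crux is a \emph{no-return} property of the construction: once a vertex ceases to lie on the current ancestry line it can never lie on a later one, and hence never gains further children. Granting this, the intermediate ancestors $v_{i_{m+1}}, \ldots, v_{i_s} = v_k$ have all their children among $v_0, \ldots, v_k$ and are thus already exhausted, while the children of $v_{i_m}$ were added in decreasing $\prec$-order with $v_{k+1}$ the first among those of index exceeding $k$; this is exactly the vertex selected by $\phi$'s backtracking exploration, which closes the induction and yields $\phi(\psi(D)) = D$.
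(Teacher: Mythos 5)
Your proposal is correct and follows essentially the same route as the paper: both reduce the identity to the claim that the depth-first exploration of \Cref{def:map} visits the vertices of $\psi(D)$ in the order $v_0,\ldots,v_n$, and both justify this via the fact that $\psi$ attaches each new vertex to the current rightmost branch (so the indexing is a depth-first pre-order and vertices that leave that branch never regain children), with children created in the same decreasing $\prec$-order in which $\phi$ explores them. The paper verifies the claim by a pairwise comparison of indices $i<j$ rather than by your step-by-step induction with the explicit backtracking analysis, but the underlying observations are identical.
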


\begin{proof}
	Let $D = (a,b)$. It is enough to show that, if $T = \psi(D)$ and $v_i$ is the vertex of $T$ corresponding to $(a_{n+1-i}, b_{n+1-i})$, then the vertices of $T$ are explored in the order $v_0 = r, v_1, \dots, v_n$. In other words, it is enough to show that, for any $i < j$, $v_i$ comes before $v_j$ in the depth-first exploration of \Cref{def:map}.

	This, however, is clear from the construction $\psi$. If $i < j$, then either $v_j$ is in the tree of descendants of $v_i$, in which case it does come after it in the exploration, or it is within the tree of descendants of a sibling $v_k$ of $v_i$. Note that it cannot be an ancestor of $v_i$, as those correspond to pairs appearing on the right of $(a_{n+1-j}, b_{n+1-j})$ in $(a,b)$. If $v_k \prec v_i$, then $v_k$ comes after $v_i$ in the exploration, and since $v_j$ is a descendant of $v_k$ so does $v_j$, as desired. If $v_k \succ v_i$ instead, then by construction all descendants of $v_k$ are of the form $v_h$ with $k < h < i$, which contradicts $i < j$. It follows that $v_i$ comes before $v_j$ in the exploration.
 \end{proof}

\subsection{Proof of \texorpdfstring{\Cref{thm:main}}{the main theorem}}

We now have all the ingredients to conclude the proof of \Cref{thm:main}. For all $\alpha, \beta \vDash n$, the maps $\phi$ and $\psi$ restrict to maps \[ \phi \colon \zRTT_0(\alpha, \beta) \rightarrow \zLD^2(\alpha, \rev(\beta)), \quad \psi \colon \zLD^2(\alpha, \beta) \rightarrow \zRTT_0(\alpha, \rev(\beta)), \] which are finite sets. Moreover, $\phi$ is injective, and (up to replacing $\beta$ with $\rev(\beta)$ in one of the two maps, and recalling that $\rev$ is an involution) we have that $\phi \circ \psi$ is the identity, so $\psi$ is also injective. This implies that both maps are bijections, which is what we wanted to show.

\bibliographystyle{plain}
\bibliography{references}

\begin{thebibliography}{10}

\bibitem{BergeronGarsiaHaimanTesler1999IdentitiesPositivityConjectures}
Fran\c{c}ois Bergeron, Adriano~M. Garsia, Mark Haiman, and Glenn Tesler.
\newblock Identities and positivity conjectures for some remarkable operators
  in the theory of symmetric functions.
\newblock {\em Methods Appl. Anal.}, 6(3):363--420, 1999.
\newblock Dedicated to Richard A. Askey on the occasion of his 65th birthday,
  Part III.

\bibitem{BergeronHaglundIraciRomero2023SuperNablaOperator}
Fran{\c{c}}ois {Bergeron}, James {Haglund}, Alessandro {Iraci}, and Marino
  {Romero}.
\newblock {The super nabla operator}.
\newblock {\em arXiv e-prints}, page arXiv:2303.00560, March 2023.

\bibitem{CarlssonMellit2018ShuffleConjecture}
Erik Carlsson and Anton Mellit.
\newblock A proof of the shuffle conjecture.
\newblock {\em J. Amer. Math. Soc.}, 31(3):661--697, 2018.

\bibitem{DAdderioIraciVandenWyngaerd2021ThetaOperators}
Michele D'Adderio, Alessandro Iraci, and Anna Vanden~Wyngaerd.
\newblock Theta operators, refined {Delta} conjectures, and coinvariants.
\newblock {\em Adv Math}, 376:107447, January 2021.

\bibitem{DAdderioMellit2022CompositionalDelta}
Michele D'Adderio and Anton Mellit.
\newblock A proof of the compositional {Delta} conjecture.
\newblock {\em Advances in Mathematics}, 402:108342, June 2022.

\bibitem{DuganGlennonGunnellsSteingrimsson2019TieredTreesqEulerian}
William Dugan, Sam Glennon, Paul~E. Gunnells, and Einar Steingrímsson.
\newblock Tiered trees, weights, and q-{E}ulerian numbers.
\newblock {\em J. Combin. Theory Ser. A}, 164:24--49, 2019.

\bibitem{DukesSeligSmithSteingrimsson2018TieredTrees}
Mark Dukes, Thomas Selig, Jason~P. Smith, and Einar Steingrimsson.
\newblock Permutation graphs and the {Abelian} sandpile model, tiered trees and
  non-ambiguous binary trees.
\newblock {\em ArXiv e-prints}, October 2018.
\newblock arXiv: 1810.02437.

\bibitem{DAdderioIraciLeBorgneRomeroVandenWyngaerd2022TieredTrees}
Michele D’Adderio, Alessandro Iraci, Yvan Le~Borgne, Marino Romero, and Anna
  Vanden~Wyngaerd.
\newblock {Tiered Trees and Theta Operators}.
\newblock {\em International Mathematics Research Notices}, 09 2022.
\newblock rnac258.

\bibitem{GunnellsLetellierRodriguezVillegas2018Quivers}
Paul~E. Gunnells, Emmanuel Letellier, and Fernando Rodriguez~Villegas.
\newblock Torus orbits on homogeneous varieties and {Kac} polynomials of
  quivers.
\newblock {\em Math. Z.}, 290(1):445--467, October 2018.

\bibitem{Haglund2008Book}
James Haglund.
\newblock {\em The {$q$},{$t$}-{C}atalan numbers and the space of diagonal
  harmonics}, volume~41 of {\em University Lecture Series}.
\newblock American Mathematical Society, Providence, RI, 2008.
\newblock With an appendix on the combinatorics of Macdonald polynomials.

\bibitem{HaglundHaimanLoehrRemmelUlyanov2005ShuffleConjecture}
James Haglund, Mark Haiman, Nicholas Loehr, Jeffrey~B. Remmel, and Anatoly
  Ulyanov.
\newblock A combinatorial formula for the character of the diagonal
  coinvariants.
\newblock {\em Duke Math J}, 126(2):195--232, 2005.

\bibitem{HaglundRemmelWilson2018DeltaConjecture}
James Haglund, Jeffrey~B. Remmel, and Andrew~T. Wilson.
\newblock The {D}elta {C}onjecture.
\newblock {\em Trans. Amer. Math. Soc.}, 370(6):4029--4057, 2018.

\bibitem{IraciRomero2022DeltaTheta}
Alessandro Iraci and Marino Romero.
\newblock Delta and theta operator expansions.
\newblock {\em Forum of Mathematics, Sigma}, 12:e30, 2024.

\bibitem{Macdonald1995Book}
Ian~G. Macdonald.
\newblock {\em Symmetric functions and {H}all polynomials}.
\newblock Oxford Mathematical Monographs. The Clarendon Press, Oxford
  University Press, New York, second edition, 1995.
\newblock With contributions by A. Zelevinsky, Oxford Science Publications.

\bibitem{Postnikov1997IntransitiveTrees}
Alexander Postnikov.
\newblock Intransitive {T}rees.
\newblock {\em J. Combin. Theory Ser. A}, 79(2):360--366, 1997.

\end{thebibliography}

\end{document}